\def\ds{\displaystyle}
\newcommand\A[1]{\mathrm{A}_{#1}}
\def\Spec{\mathrm{Spec}}
\def\Hom{\mathrm{Hom}}
\def\det{\mathrm{det}}
\def\norm{\mathrm{Nm}}
\def\trace{\mathrm{Tr}}
\def\NN{\mathbb{N}}
\newcommand{\B}[1]{\operatorname{\mathbf{B}{#1}}}
\newcommand{\extpower}{{\textstyle\bigwedge}}
\def\eps{\varepsilon}
\def\simto{\ds\mathop{\longrightarrow}^\sim\,}
\renewcommand\A{\mathcal{A}}
\renewcommand\B{\mathcal{B}}
\newcommand\C{\mathcal{C}}
\newcommand\D{\mathcal{D}}
\renewcommand\L{\mathcal{L}}
\newcommand\G{\mathcal{G}}
\newcommand\M{\mathcal{M}}
\newcommand{\disc}{\delta}
\newcommand{\stdi}[1]{\tau}
\newcommand{\quadalg}[3][]{(\!({#2} : {#3}]\hspace{-0.07em}]_{{#1}}}
\newcommand{\bigquadalg}[3][]{\left(\!\!\left({#2} : {#3}\right]\!\right]_{{#1}}}
\newtheorem{theorem}{Theorem}[section] 
\crefname{theorem}{Theorem}{Theorems}
\newtheorem{lemma}[theorem]{Lemma}
\crefname{lemma}{Lemma}{Lemmas}
\newtheorem{proposition}[theorem]{Proposition}
\crefname{proposition}{Proposition}{Propositions}
\crefname{corollary}{Corollary}{Corollaries}
\newtheorem{conjecture}[theorem]{Conjecture}
\crefname{conjecture}{Conjecture}{Conjectures}
\theoremstyle{definition} 
\newtheorem{definition}[theorem]{Definition}
\crefname{definition}{Definition}{Definitions}
\newtheorem{remark}[theorem]{Remark}
\crefname{remark}{Remark}{Remarks}
\newtheorem{example}[theorem]{Example}
\crefname{example}{Example}{Examples}
\author{Owen Biesel\footnote{Carleton College, Northfield, MN. Email: obiesel@carleton.edu}}
\begin{document}
\title{A Norm Functor for Quadratic Algebras}
\maketitle
\begin{abstract}
Given commutative, unital rings $\A$ and $\B$ with a ring homomorphism $\A\to\B$ making $\B$ free of finite rank as an $\A$-module, we can ask for a ``trace'' or ``norm'' homomorphism taking algebraic data over $\B$ to algebraic data over $\A$. In this paper we we construct a norm functor for the data of a quadratic algebra: given a locally-free rank-$2$ $\B$-algebra $\D$, we produce a locally-free rank-$2$ $\A$-algebra $\norm_{\B/\A}(\D)$ in a way that is compatible with other norm functors and which extends a known construction for étale quadratic algebras. We also conjecture a relationship between discriminant algebras and this new norm functor.
\end{abstract}
\tableofcontents

\section{Introduction}

Given a homomorphism of commutative, unital rings $\A\to\B$ that makes $\B$ locally free of finite rank $n$ as an $\A$-module, there is a zoo of ``trace'' or ``norm'' operations taking algebraic data over $\B$ to algebraic data of the same type over $\A$. For example:

\begin{itemize}
 \item Given an element $b\in \B$, we can take its \emph{trace} to get an element of $\A$: working locally so that $\B$ has an $\A$-basis, represent multiplication by $b$ as the action of a square matrix over $\A$, and then take the trace of that matrix. This gives us an $\A$-linear function $\trace_{\B/\A}:\B\to\A$.
 \item Alternatively, we can take the \emph{norm} of $b$ by taking the determinant of the matrix by which $b$ acts. This gives a multiplicative function $\norm_{\B/\A}: \B\to\A$. 
 \item Given a line bundle (a locally free rank-$1$ module) $\L$ over $\B$, we can take its ``norm'' to get $\norm_{\B/\A}(\L) = \Hom_{\A}(\extpower^n \B, \extpower^n \L)$, a line bundle over $\A$. We can also apply this norm operations to homomorphisms of line bundles $\L\to \L'$, and furthermore, if an endomorphism $f:\L\to\L$ is given by multiplication by $b\in\B$, then the endomorphism $\norm_{\B/\A}(f)$ of $\norm_{\B/\A}(\L)$ is multiplication by the ordinary norm of $b$ in $\A$.
 \item More generally, in \cite{Ferrand} Ferrand shows how to take an arbitrary $\B$-module $\M$ and construct its ``norm'' $\norm_{\B/\A}(M)$ as an $\A$-module. In the special case that $\M$ is a line bundle this functor agrees with the above definition of $\norm_{\B/\A}(M)$, but in general if $\M$ is locally free it does not follow that $\norm_{\B/\A}(\M)$ is locally free.
\end{itemize}

In each case, there is a group or monoid operation that the trace or norm preserves. The ordinary trace preserves addition: $\trace_{\B/\A}(b+b') = \trace_{\B/\A}(b) + \trace_{\B/\A}(b')$, while the ordinary norm preserves multiplication $\norm_{\B/\A}(bb') = \norm_{\B/\A}(b)\norm_{\B/\A}(b')$. On the other hand, the norm for line bundles and other modules is a functor that preserves tensor products up to isomorphism: $\norm_{\B/\A}(\M\otimes_\B \M') \cong \norm_{\B/\A}(\M)\otimes_\A \norm_{\B/\A}(\M')$.

In case the map of schemes $\pi: \Spec(\B)\to\Spec(\A)$ is \'etale of degree $n$ (meaning that after a faithfully-flat finite-presentation base change it becomes a trivial degree-$n$ cover of the form $\coprod_{i=1}^n \Spec(\A)\to\Spec(\A)$), then for any sheaf $\G$ of abelian groups on the big \'etale site over $\Spec(\A)$ we have a ``trace'' homomorphism $\pi_\ast\pi^\ast \G \to \G$, for which taking sections over $\Spec(\A)$ gives us a homomorphism $\G(\B)\to\G(\A)$. If $\G$ is the additive group $\mathbb{G}_{a, \A}$, represented by the scheme $\Spec(\A[x])$, then this homomorphism is just the ordinary trace map $\B\to\A$. If $\G$ is the multiplicative group $\mathbb{G}_{m,\A}$ represented by $\Spec(\A[x,x^{-1}])$, then we get the norm map on units $\B^\times\to\A^\times$. It is not hard to extend the argument in \cite[Lemma V.1.12]{Milne} to sheaves of commutative monoids, giving us the full multiplicative norm map $\B\to\A$.

Meanwhile, for the norm of line bundles, one way to think about what is going on in the case of $\A\to\B$ \'etale is that line bundles are torsors for $\mathbb{G}_m$, and so we are applying $H^1(\Spec(\A), \cdot)$ to the ``trace'' homomorphism $\pi_\ast\pi^\ast \mathbb{G}_{m,\A}\to\mathbb{G}_{m,\A}$, giving a function
\begin{align*}
H^1(\Spec(\A), \pi_\ast\pi^\ast \mathbb{G}_{m,\A}) &\to H^1(\Spec(\A), \mathbb{G}_{m,\A})\\
\text{i.e. }H^1(\Spec(\B), \mathbb{G}_{m,\B}) &\to H^1(\Spec(\A), \mathbb{G}_{m,\A})
\end{align*}
sending the isomorphism class of a line bundle over $\B$ to the isomorphism class of its norm as a line bundle over $\A$. This, however, forgets the functorial nature of the norm operation on line bundles. Meanwhile, Ferrand's extension of the norm functor to all $\B$-modules is defined in terms of a universal property ($\norm_{\B/\A}(\M)$ is the universal $\A$-module equipped with a homogeneous degree-$n$ polynomial law $\M\to\norm_{\B/\A}(\M)$) and does not readily admit a cohomological interpretation even in the \'etale case.

However, in \cite{Waterhouse} Waterhouse uses the cohomological method to define a ``trace'' operation for \'etale quadratic algebras, since they are parameterized by $S_2$-torsors. Explicitly, if $\A\to\B$ is a rank-$n$ \'etale algebra in which $2$ is a unit, we can describe this ``trace'' operation as sending each \'etale quadratic $\B$-algebra of the form $\B[\sqrt{d}]$, with $d\in\B^\times$, to the \'etale quadratic $\A$-algebra $\A\left[\sqrt{\norm_{\B/\A}(d)}\right]$. It is the goal of the present paper to extend this construction to a norm functor from the category of quadratic $\B$-algebras to the category of quadratic $\A$-algebras, regardless of whether any of the algebras are \'etale or whether $2$ is a unit. This norm operation is defined for quadratic algebras with a chosen generator in \cref{def:norm-based}, made functorial in \cref{prop:different-generator}, and extended to arbitrary quadratic algebras in \cref{def:norm}.

The main theorems about this norm functor for quadratic algebras are:
\begin{enumerate}
 \item It commutes with base change (\cref{thm:base-change}).
 \item It is ``transitive'' in the sense that for a tower of algebras $\A\to\B\to\C$, taking the norm of a quadratic $\C$-algebra to get a quadratic $\B$-algebra, then taking the norm again to get a quadratic $\A$-algebra, gives the same result as regarding $\C$ as an $\A$-algebra and applying the norm operation once (\cref{thm:transitive}).
 \item It extends the notion of trace of an $S_2$-torsor in the case that the algebras are \'etale (\cref{thm:etale-addition}).
 \item For the unique extension of $S_2$-torsor addition to a monoid operation $\ast$ on all quadratic algebras characterized by Voight in \cite{Voight}, the norm functor is a monoid homomorphism with respect to $\ast$ (\cref{thm:homomorphism}).
 \item Taking the norm of a quadratic algebra commutes with taking its determinant line bundle and discriminant bilinear form (\cref{thm:norm-det} and \cref{thm:norm-disc}).
\end{enumerate}

However, it is still an open question whether this notion of the norm of a quadratic algebra is compatible with the notion of discriminant algebra from \cite{BieselGioia} in the sense of \cite{Waterhouse}; see \cref{conj:norm-discalg}.

Acknowledgements: The author would like to thank John Voight and Asher Auel for careful reading and helpful comments on an early draft, and Marius Stekelenburg for a much simpler proof of \cref{lem:transitive-sn}.

\section{Finite-rank and Quadratic Algebras}

We begin with background and notation for concepts related to rank-$n$ algebras in general and quadratic algebras in particular. In the following, all rings and algebras are commutative and unital, and are denoted by calligraphic capital letters: $\A, \B,\C,\D,\dots$.

\begin{definition}
 Let $\A$ and $\B$ be rings and $\A\to\B$ a ring homomorphism, so that $\B$ is an $\A$-algebra. We say that $\B$ is a \emph{rank-$n$ $\A$-algebra} if $\B$ is projective of constant rank $n$ as an $\A$-module; equivalently, if there exist $a_1,\dots,a_k\in\A$ together generating the unit ideal such that each localization $\B_{a_i}$ is isomorphic to $\A_{a_i}^{\oplus n}$ as $\A_{a_i}$-modules.
\end{definition}
\begin{definition}\label{def:quadalg}
 A \emph{quadratic $\A$-algebra} $\D$ is an $\A$-algebra of rank $2$. If $\D$ is equipped with a choice of algebra generator $x$, so that $\D\cong \A[x]/(x^2 - tx + n)$ for some elements $t,n\in\A$ (the \emph{trace} and \emph{norm} of $x\in\D$), then we call $\D$ a \emph{based} quadratic algebra. (Not every quadratic algebra admits a singleton generating set, but every quadratic algebra does so locally.) As a based quadratic algebra is determined up to unique isomorphism by the ordered pair $(t,n)$, we follow \cite{Loos} in using the notation
 \[\quadalg[\A]{t}{n} \coloneqq \A[x]/(x^2 - tx + n)\]
 for based quadratic algebras (the subscript ring may be omitted if it is clear from context). For example, $\quadalg[\A]{1}{0} = \A[x]/(x^2 - x) \cong \A\times\A$ with generating element $(1,0)$, and $\quadalg[\A]{0}{0}\cong \A[\eps]/(\eps^2)$ with generator $\eps$.
\end{definition}

In \cref{def:quadalg}, we referred to the trace and norm of an element. We will not continue to need the notion of trace of a general element of a rank-$n$ algebra, but we will make frequent use of elements' norms. Because there are so many notions of ``norm'' we will be using in this paper, we will reserve the notation $\norm_{\B/\A}$ for norm functors, applied to mathematical objects such as line bundles and other modules, and soon quadratic algebras. To refer to the ordinary norm function for a rank-$n$ algebra $\A\to\B$, we use the notation $s_n$ as defined below:

\begin{definition}\label{def:s_n}
 Let $\A\to\B$ be a rank-$n$ algebra. Then there is a canonical function $s_n\colon \B\to\A$, called the \emph{norm}, defined as follows:
 \begin{enumerate}
  \item If $\B$ has a basis as an $\A$-module, and if $b$ is an element of $\B$, then its norm $s_n(b)$ is the determinant of the map $\B\to\B$ given by multiplication by $b$ (which is independent of choice of basis).
  \item If $\B$ is a general rank-$n$ $\A$-algebra, and if $b\in\B$, then we can define the norm of $b$ in any localization $\B_a$ that is free as an $\A_a$-module. These values of $s_n(b)$ in each such $\A_a$ glue to give a single well-defined value for $s_n(b)\in \A$.
 \end{enumerate}
\end{definition}
\begin{remark} The norm function $s_n\colon \B\to\A$ is not generally a ring homomorphism, but rather has the property that if $a\in\A$ is any element of the base ring, then $s_n(ab) = a^n s_n(b)$. Furthermore, the calculation of $s_n(b)$ commutes with base change, making $s_n$ an example of a \emph{polynomial law} in the sense of \cite{Roby}. For us, the upshot of $s_n$ being a polynomial law is that it automatically comes with polarized versions $s_{k_1,k_2,\ldots, k_m}(b_1,b_2,\ldots b_m)$ for each list of natural numbers $k_1+\dots+k_m = n$, defined by
 \[s_{k_1,k_2,\ldots, k_m}(b_1,b_2,\ldots b_m) \coloneqq \begin{array}{cc}
 \text{the coefficient of }\lambda_1^{k_1}\lambda_2^{k_2}\cdots\lambda_{m}^{k_m}\text{ in }\\s_n(\lambda_1 b_1+\lambda_2 b_2 + \ldots + \lambda_m b_m)\end{array},\]
 the latter of which is calculated via the norm map of the base changed rank-$n$ algebra $\A[\lambda_1,\dots,\lambda_m]\to\B[\lambda_1,\dots,\lambda_m]$.
Phrased another way, the $s_{k_1,\dots,k_m}$ are defined so that we have the following identity:
\[s_n(\lambda_1b_1+\ldots+\lambda_mb_m) = \sum_{\substack{k_1,\dots,k_m\in\NN\\
k_1+\ldots+k_m = n}}\lambda_1^{k_1}\dots \lambda_m^{k_m} s_{k_1,\dots,k_m}(b_1,\dots,b_m).\]
 For example, the coefficients of the characteristic polynomial of an element $b\in\B$ can be expressed in terms of the polarized forms of the norm function:
 \begin{align*}
  s_n(\lambda - b) &= \left.\vphantom{\sum}s_n(\lambda 1 + \mu b)\right|_{\mu=-1}\\
  &= \left.\sum_{k=0}^n \lambda^k\mu^{n-k} s_{k,n-k}(1,b)\right|_{\mu=-1}\\
  &= \sum_{k=0}^n \lambda^k (-1)^{n-k} s_{k, n-k}(1,b).
 \end{align*}
  In particular, the \emph{trace} of $b\in \B$ can be calculated as the trace of the matrix representing multiplication by $b$ in any localization making $\B$ a free $\A$-module, or it can be expressed as the polarized form $s_{1,n-1}(b, 1)$.
\end{remark}

We now prove some of the basic results about the norm functions $s_n$ that we will need later:

\begin{lemma}\label{lem:multiplicative-sn}
 The norm functions are multiplicative: If $\A\to\B$ is a rank-$n$ algebra and $b,b'\in\B$, then $s_n(bb') = s_n(b)s_n(b')$ in $\A$.
\end{lemma}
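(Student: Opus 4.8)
The plan is to reduce to the case where $\B$ is a free $\A$-module and then invoke the multiplicativity of the determinant. First I would note that it suffices to verify the identity $s_n(bb') = s_n(b)s_n(b')$ after localizing $\A$: an equality of two elements of $\A$ holds in $\A$ if and only if it holds in each localization $\A_{a_i}$, where $a_1,\dots,a_k\in\A$ is a chosen collection generating the unit ideal. By the definition of a rank-$n$ algebra we may take the $a_i$ so that each $\B_{a_i}$ is free of rank $n$ over $\A_{a_i}$, and by \cref{def:s_n}(2) the formation of $s_n$ is compatible with this localization (this is exactly the gluing that makes $s_n$ well defined in the first place). So we are reduced to the case that $\B$ is free of rank $n$ as an $\A$-module.

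In the free case, fix an $\A$-basis of $\B$ and for $c\in\B$ let $M_c$ denote the square matrix over $\A$ representing the $\A$-linear endomorphism ``multiplication by $c$'' of $\B$ in this basis. Because $\B$ is commutative and associative, multiplication by $bb'$ is the composite of multiplication by $b$ with multiplication by $b'$, so $M_{bb'} = M_b M_{b'}$. Hence
\[ s_n(bb') = \det(M_{bb'}) = \det(M_b M_{b'}) = \det(M_b)\det(M_{b'}) = s_n(b)\,s_n(b'), \]
where the middle equality is the multiplicativity of the determinant for matrices over the commutative ring $\A$.

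I do not expect any serious obstacle here; the only point meriting a word of care is the localization step, namely confirming that the value of $s_n$ is genuinely local on $\Spec(\A)$ so that the identity may be checked on a cover by principal opens — and this is immediate from the construction in \cref{def:s_n}. Once we are in the free case, the claim is just the classical fact that $\det$ is a monoid homomorphism from $n\times n$ matrices over $\A$ to $\A$, together with the observation that the multiplication operators of a commutative algebra form a commuting (indeed, homomorphic) image of $\B$ inside $\End_\A(\B)$.
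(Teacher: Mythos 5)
Your proof is correct and follows essentially the same route as the paper's: reduce to the free case by localizing (which is legitimate since equality of elements of $\A$ can be checked locally and $s_n$ is defined by gluing), then observe that the matrix of multiplication by $bb'$ is the product of the matrices for $b$ and $b'$ and apply multiplicativity of the determinant. Your write-up merely spells out the localization step in more detail than the paper does.
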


\begin{proof}
 We can check locally, so assume that $\B$ has an $\A$-basis. Then the $n\times n$ matrix by which multiplication by $bb'$ acts, with respect to this basis, is the product of the matrices by which $b$ and $b'$ act. Taking determinants of both sides, and since the determinant is multiplicative, we obtain $s_n(bb') = s_n(b)s_n(b')$.
\end{proof}

\begin{lemma}\label{lem:transitive-sn}
 The norm functions are transitive: Let $\A\to\B$ be a rank-$n$ algebra and $\B\to\C$ be a rank-$m$ algebra, so that $\C$ is also a rank-$mn$ algebra. For all $c\in\C$, we have $s_n(s_m(c)) = s_{mn}(c)$ as elements of $\A$.
\end{lemma}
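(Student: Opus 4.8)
The plan is to reduce the statement to an identity in linear algebra and then invoke the classical formula for the determinant of a block matrix whose blocks commute. Since all three norm functions $s_m$, $s_n$, $s_{mn}$ are compatible with base change, and since an equality of elements of $\A$ may be checked on a Zariski cover of $\Spec\A$, I would first localize to assume that $\B$ is free over $\A$ with basis $\epsilon_1,\dots,\epsilon_n$. This produces the regular representation, a ring homomorphism $\rho\colon\B\to M_n(\A)$ characterized by $b\epsilon_i=\sum_l\rho(b)_{li}\epsilon_l$, for which $s_n(b)=\det_\A(\rho(b))$ essentially by definition.

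Next I would recast the goal. For $c\in\C$, let $f$ be multiplication by $c$: viewed as a $\B$-linear endomorphism of the rank-$m$ projective $\B$-module $\C$ it has $\det_\B(f)=s_m(c)$, and viewed as an $\A$-linear endomorphism of the rank-$mn$ projective $\A$-module $\C$ it has $\det_\A(f)=s_{mn}(c)$. So it suffices to prove $\det_\A(f)=s_n(\det_\B(f))$ for an arbitrary $\B$-linear endomorphism $f$ of a finitely generated projective $\B$-module $P$ of rank $m$. I would reduce to $P$ free by choosing $Q$ with $P\oplus Q\cong\B^r$ and replacing $f$ by $f\oplus\id_Q$: this changes neither $\det_\B$ nor $\det_\A$, since determinants of endomorphisms of projective modules are multiplicative over direct sums and $\det(\id_Q)=1$. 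Now take $P=\B^r$ with $f$ given by $M=(M_{jk})\in M_r(\B)$. A short computation with the $\A$-basis $\{e_k\epsilon_i\}$ of $P$ shows that the matrix of $f$ over $\A$ is the block matrix $\bigl(\rho(M_{jk})\bigr)_{j,k}$ of size $rn\times rn$; hence $\det_\A(f)$ equals the determinant of that block matrix, whereas $s_n(\det_\B M)=\det_\A\bigl(\rho(\det_\B M)\bigr)=\det_\A\Bigl(\sum_{\sigma\in\permgroup{r}}\sgn(\sigma)\prod_j\rho(M_{j\sigma(j)})\Bigr)$ because $\rho$ is a ring homomorphism.

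We are thus reduced to the classical fact that when the blocks of a block matrix pairwise commute — as they do here, being images of the commutative ring $\B$ under the ring homomorphism $\rho$ — its determinant equals the determinant of the $n\times n$ matrix obtained by formally expanding the block-determinant. I expect this to be the main obstacle: over a general base ring one cannot diagonalize, so I would either cite it or prove it by handling an algebraically closed field first (where commuting matrices are simultaneously triangularizable and both sides become manifestly equal) and then descending to the universal coefficient ring, with a little care since the scheme parameterizing tuples of commuting matrices need not be reduced. A more conceptual alternative sidesteps this computation: apply the norm functor for line bundles to the multiplication-by-$c$ endomorphism of the trivial line bundle $\C$ over $\C$; by the transitivity $\norm_{\B/\A}\circ\norm_{\C/\B}\cong\norm_{\C/\A}$, one side of the resulting endomorphism of the canonical $\A$-module $\A$ is multiplication by $s_n(s_m(c))$ and the other is multiplication by $s_{mn}(c)$. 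With that approach the work moves into establishing the transitivity of the line-bundle norm functor, which is a routine $\extpower$-computation.
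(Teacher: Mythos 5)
Your proof is correct and takes essentially the same route as the paper's: reduce to the free case, write multiplication by $c$ as an $m\times m$ matrix over $\B$ whose entries are then expanded into $n\times n$ blocks over $\A$, and invoke the classical theorem that the determinant of a block matrix with pairwise commuting blocks equals the determinant of its formal block-determinant (which the paper simply cites rather than reproving). The only difference is cosmetic: you make the $\B$-module free by stabilizing with a projective complement $Q$ and using $f\oplus\id_Q$, whereas the paper instead localizes $\A$ further so that $\C$ becomes free over $\B$; both reductions are valid.
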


\begin{proof}
 We can work locally on $\A$, so assume that $\B$ is free as an $\A$-module. By \cite[Prop.\ 6.1.12 on p.\ 113]{EGAII}, since $\A\to\B$ is finite we can also assume that $\C$ is free as a $\B$-module by localizing $\A$. So assume that $\B$ is free of rank $n$ as an $\A$-module and that $\C$ is free of rank $m$ as a $\B$-module.
  Choose an $\A$-basis $\theta_1,\dots,\theta_n$ for $\B$ and a $\B$-basis $\phi_1,\dots,\phi_m$ for $\C$. 
  
  First consider the $m\times m$ matrix $(M_{ij})_{i,j}$ with elements in $\B$ representing multiplication by $c$ with respect to basis $\phi_1,\dots,\phi_m$, i.e.\ 
  \[c\phi_i = \sum_{j=1}^m M_{ij}\phi_j.\]
  Then for each $M_{ij}\in\B$, consider the $n\times n$ matrix $(P_{ijk\ell})_{k\ell}$ representing multiplication by $M_{ij}$ with respect to basis $\theta_1,\dots,\theta_n$, i.e.\ 
  \[M_{ij}\theta_k = \sum_{\ell=1}^n P_{ijk\ell} \theta_\ell.\]
  Then the $mn\times mn$ matrix $(P_{ijk\ell})_{(i,k), (j,\ell)}$, with rows and columns indexed by $\{1,\dots,m\}\times\{1,\dots,n\}$, represents multiplication by $c$ with respect to the $\A$-basis $\phi_1\theta_1, \phi_2\theta_1, \dots, \phi_m\theta_n$ for $\C$:
  \begin{align*}
   c \phi_i\theta_k &= \sum_{j=1}^m M_{ij}\phi_j\theta_k\\
   &= \sum_{j=1}^m \sum_{\ell=1}^n P_{ijk\ell}\phi_j\theta_\ell\\
   &= \sum_{(j,\ell)\in\{1,\dots,m\}\times\{1,\dots,n\}} P_{ijk\ell} \phi_j\theta_\ell.
  \end{align*}
We can regard $(P_{ijk\ell})_{(i,k), (j,\ell)}$ as an $m\times m$ block matrix, where the $n\times n$ blocks that represent multiplication by $M_{ij}$ all commute with each other (since the elements $M_{ij}\in\B$ all commute, as $\B$ is a commutative algebra). Then by \cite{Kovacs}, the determinant of $(P_{ijk\ell})_{(i,k), (j,\ell)}$ is equal to the determinant of the $n\times n$ matrix given by formally applying the $m\times m$ determinant formula to the blocks of $(P_{ijk\ell})_{(i,k), (j,\ell)}$. In other words,
\begin{align*}
 s_{mn}(c) &= s_n(\det((M_{ij})_{i,j}))= s_n(s_m(c)).\qedhere
\end{align*}
\end{proof}

Here is a collection of results about the polarized forms of $s_n$:

\begin{lemma}\label{lem:polarized-identity}
 Let $\A\to\B$ be a rank-$n$ algebra, let $n = k_1+\ldots+k_m$ be a partition of $n$, and let $b_1,\dots,b_m\in\B$. Then the following identities hold:
 \begin{enumerate}
  \item (Reordering) If $\sigma:\{1,\dots,m\}\to\{1,\dots,m\}$ is any permutation, then
  \[s_{k_1,\dots,k_m}(b_1,\dots,b_m) = s_{k_{\sigma(1)},\dots,k_{\sigma(m)}}(b_{\sigma(1)},\dots,b_{\sigma(m)}).\]
  \item (Combination) If $b_1 = b_2$, then
  \[s_{k_1,\dots,k_m}(b_1,\dots,b_m) = \binom{k_1+k_2}{k_1} s_{k_1+k_2,k_3,\dots,k_m}(b_1,b_3,\dots,b_m).\]
  \item (Homogeneity) If $a\in\A$, then
  \[s_{k_1,\dots,k_m}(ab_1,\dots,b_m) = a^{k_1} s_{k_1,\dots,k_m}(b_1,\dots,b_m).\]
  \item (Degeneracy) If $k_1=0$, then
  \[s_{k_1,k_2,\dots,k_m}(b_1,b_2,\dots,b_m) = s_{k_2,\dots,k_m}(b_2,\dots,b_m).\]
  \item (Multiplicativity) If $b'\in\B$, then
  \[ s_{k_1,\dots,k_m}(b'b_1,\dots,b'b_m) = s_n(b') s_{k_1,\dots,k_m}(b_1,\dots,b_m).\]
 \end{enumerate}
\end{lemma}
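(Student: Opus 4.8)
The plan is to deduce all five identities from the single defining relation
\[ s_n(\lambda_1 b_1 + \dots + \lambda_m b_m) = \sum_{\substack{k_1,\dots,k_m\in\NN\\ k_1+\dots+k_m=n}}\lambda_1^{k_1}\cdots\lambda_m^{k_m}\, s_{k_1,\dots,k_m}(b_1,\dots,b_m), \]
regarded as an identity in the rank-$n$ algebra $\A[\lambda_1,\dots,\lambda_m]\to\B[\lambda_1,\dots,\lambda_m]$. The two tools used throughout are: (i) $s_n$ is a polynomial law, hence commutes with base change along any $\A$-algebra homomorphism out of $\A[\lambda_1,\dots,\lambda_m]$, so we may legitimately substitute chosen values or expressions for the $\lambda_i$; and (ii) distinct monomials in the $\lambda_i$ are linearly independent over $\A$ in the polynomial ring, so after such a substitution we may compare coefficients of monomials on the two sides.

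Concretely, for (1) (Reordering) I observe $\sum_i\lambda_i b_i=\sum_i \lambda_{\sigma(i)}b_{\sigma(i)}$ and reindex the defining sum, matching the coefficient of $\prod_i\lambda_i^{k_i}$. For (2) (Combination), since $b_1=b_2$ I apply the homomorphism $\A[\mu,\lambda_3,\dots,\lambda_m]\to\A[\lambda_1,\dots,\lambda_m]$ with $\mu\mapsto\lambda_1+\lambda_2$: the left side becomes $s_n(\mu b_1 + \lambda_3 b_3 + \dots)$ specialized at $\mu=\lambda_1+\lambda_2$, and expanding $(\lambda_1+\lambda_2)^{k_1+k_2}$ by the binomial theorem and reading off the coefficient of $\lambda_1^{k_1}\lambda_2^{k_2}\lambda_3^{k_3}\cdots$ produces the factor $\binom{k_1+k_2}{k_1}$. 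For (3) (Homogeneity), I substitute $\lambda_1\mapsto a\lambda_1$ (base change along $\mu_1\mapsto a\lambda_1$), which replaces $b_1$ by $ab_1$ on the left and multiplies the coefficient of $\lambda_1^{k_1}\cdots$ by $a^{k_1}$ on the right. For (4) (Degeneracy), I substitute $\lambda_1\mapsto 0$: the left side becomes $s_n(\lambda_2 b_2 + \dots + \lambda_m b_m)$ while on the right only the terms with $k_1=0$ survive, and comparing coefficients of $\lambda_2^{k_2}\cdots\lambda_m^{k_m}$ gives the claim. For (5) (Multiplicativity), I factor $\sum_i\lambda_i(b'b_i)=b'\cdot\sum_i\lambda_i b_i$ and apply \cref{lem:multiplicative-sn} in the base-changed algebra $\A[\lambda_1,\dots,\lambda_m]\to\B[\lambda_1,\dots,\lambda_m]$ to get $s_n(b')\cdot s_n(\sum_i\lambda_i b_i)$; expanding and comparing the coefficient of $\prod_i\lambda_i^{k_i}$ yields $s_{k_1,\dots,k_m}(b'b_1,\dots,b'b_m)=s_n(b')\, s_{k_1,\dots,k_m}(b_1,\dots,b_m)$.

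I do not expect a serious obstacle; the content is entirely formal once one takes seriously that $s_n$ is a polynomial law. The one point that genuinely requires that hypothesis (rather than merely $s_n$ being a function $\B\to\A$) is the legitimacy of the substitutions $\lambda_i\mapsto(\text{expression})$ in steps (2)--(4): each is the base change of $s_n$ along an $\A$-algebra map of polynomial rings, and polynomial laws commute with base change by definition. The only other thing to watch is the index bookkeeping in the Reordering case. If one preferred to avoid the polynomial-law formalism, one could instead reduce to the case where $\B$ is $\A$-free and argue via $s_n(b)=\det(\text{multiplication by }b)$, but the generating-function argument above is cleaner and needs no such reduction.
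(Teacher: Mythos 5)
Your proposal is correct and follows essentially the same route as the paper's proof: each identity is read off by comparing coefficients of monomials in the defining expansion of $s_n(\lambda_1 b_1+\dots+\lambda_m b_m)$, with the substitutions $\lambda_1+\lambda_2$, $a\lambda_1$, and $0$ justified by base change of the polynomial law, and multiplicativity reduced to \cref{lem:multiplicative-sn} over $\A[\lambda_1,\dots,\lambda_m]$. Nothing further is needed.
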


\begin{proof}
 \begin{enumerate}
  \item (Reordering) This follows from comparing the coefficients of $\lambda_1^{k_1}\dots \lambda_m^{k_m} = \lambda_{\sigma(1)}^{k_{\sigma(1)}}\dots \lambda_{\sigma(m)}^{k_{\sigma(m)}}$ in $s_n(\lambda_1b_1+\dots+\lambda_m b_m) = s_n(\lambda_{\sigma(1)}b_{\sigma(1)} + \dots + \lambda_{\sigma(m)}b_{\sigma(m)})$.
  \item (Combination) The term $s_{k_1,\dots,k_m}(b_1,\dots,b_m)$ is by definition the coefficient of $\lambda_1^{k_1}\dots \lambda_m^{k_m}$ in
  \begin{align*}
   s_n(\lambda_1 b_1 + \ldots + \lambda_m b_m) &= s_n\bigl((\lambda_1+\lambda_2)b_1+\ldots + \lambda_m b_m\bigr)\\
   &= \sum_{\substack{\ell, \ell_3, \dots, \ell_m\in \NN:\\ \ell+\ell_3+\ldots+\ell_m = n}} (\lambda_1+\lambda_2)^\ell \lambda_3^{\ell_3}\dots\lambda_m^{\ell_m} s_{\ell, \ell_3, \ldots, \ell_m}(b_1,b_3,\dots,b_m),
  \end{align*}
  in which the coefficient of $\lambda_1^{k_1}\lambda_2^{k_2}\dots \lambda_m^{k_m}$ is $\binom{k_1+k_2}{k_1} s_{k_1+k_2,k_3,\dots,k_m}(b_1,b_3,\ldots,b_m)$, as desired.
  \item (Homogeneity) The quantity $s_{k_1,\dots,k_m}(ab_1,\dots,b_m)$ is the coefficient of $\lambda_1^{k_1}\dots\lambda_m^{k_m}$ in
  \[s_n(a\lambda_1b_1+\lambda_2b_2+\dots+\lambda_m b_m) = \sum_{\substack{\ell_1,\dots,\ell_m\in\NN\\\ell_1+\ldots+\ell_m = n}} (a\lambda_1)^{\ell_1}\lambda_2^{\ell_2}\dots \lambda_m^{\ell_m} s_{\ell_1,\dots,\ell_m}(b_1,\dots,b_m),\]
  so the coefficient of $\lambda_1^{k_1}\lambda_2^{k_2}\dots \lambda_m^{k_m}$ is $a^{k_1}s_{k_1,\dots,k_m}(b_1,\dots,b_m)$, as desired.
  \item (Degeneracy) If $k_1 = 0$, then we are looking for the coefficient of $\lambda_2^{k_2}\dots\lambda_m^{k_m}$ in $s_n(\lambda_1b_1 + \dots + \lambda_m b_m)$, which since it does not involve $\lambda_1$ is unchanged if we base change to give $\lambda_1$ a concrete value. In particular, we can base change along $\A[\lambda_1,\dots,\lambda_m]\to\A[\lambda_2,\dots,\lambda_m] : \lambda_1\mapsto 0$, giving us the coefficient of $\lambda_2^{k_2}\dots\lambda_m^{k_m}$ in $s_n(\lambda_2b_2 + \dots + \lambda_m b_m)$, namely, $s_{k_2,\dots,k_m}(b_2,\dots,b_m)$.
  \item We have that $s_{k_1,\dots,k_m}(b'b_1,\dots,b'b_m)$ is the coefficient of $\lambda_1^{k_1}\dots \lambda_m^{k_m}$ in 
  \[s_n(\lambda_1b'b_1 + \ldots +\lambda_m b' b_m) = s_n(b') s_n(\lambda_1 b_1 + \ldots + \lambda_m b_m)\]
  by \cref{lem:multiplicative-sn}.
  The coefficient of $\lambda_1^{k_1}\dots \lambda_m^{k_m}$ is therefore $s_n(b') s_{k_1,\dots,k_m}(b_1,\dots,b_m)$.
 \end{enumerate}
\end{proof}

Now we consider the various senses in which rank-$n$ algebras over $\A$ form not just a set of isomorphism classes, but a category:

\begin{definition}
 There are various levels of strictness for homomorphisms of rank-$n$ $\A$-algebras. The weakest is that of ordinary $\A$-algebra homomorphism. A stronger notion that uses the rank-$n$ structure is that of a \emph{norm-preserving} homomorphism, which is an $\A$-algebra homomorphism $\B\to\B'$ that, together with the norm functions $s_n$, makes the resulting triangle of functions
 \[\begin{array}{ccccc}
  \B & & \to & & \B'\\
  & \searrow & & \swarrow\\
  & & A & & 
 \end{array}\]
 commute. (For example, if $\A$ is any nonzero ring, the $\A$-algebra homomorphism $\A\times\A \to\A\times\A$ sending $(a,a')$ to $(a,a)$ is not norm-preserving, since $(1,0)$ has norm $0$ and $(1,1)$ has norm $1$.)
 
 There is a stronger notion of \emph{universally norm-preserving} homomorphism, which is a norm-preserving $\A$-algebra homomorphism that remains so after arbitrary base change. Isomorphisms of rank-$n$ algebras are always universally norm-preserving, though the converse is not true.
\end{definition}

However, we can say more in case our algebras are quadratic:

\begin{lemma}
 Let $\A$ be a ring and $\quadalg[\A]{t}{n}$ and $\quadalg[\A]{t'}{n'}$ be two based quadratic algebras. For a homomorphism $f:\quadalg[\A]{t'}{n'}\to\quadalg[\A]{t}{n}$ sending $x\mapsto ux+c$, the following are equivalent:
 \begin{enumerate}
 \item $f$ is universally norm-preserving,
 \item $f$ is norm-preserving,
 \item $t' = ut+2c$ and $n' = u^2n + uct + c^2$.
\end{enumerate}
\end{lemma}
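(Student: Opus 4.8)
The plan is to reduce everything to the explicit shape of the norm function on a based quadratic algebra. In $\quadalg[\A]{t}{n} = \A[x]/(x^2 - tx + n)$, with respect to the $\A$-basis $\{1,x\}$ and using $x^2 = tx - n$, multiplication by $a + bx$ sends $1 \mapsto a + bx$ and $x \mapsto ax + bx^2 = -bn + (a+bt)x$, so taking the determinant of the resulting $2\times 2$ matrix gives
\[ s_2(a + bx) = a(a + bt) + b^2 n = a^2 + tab + nb^2 \]
in $\A$; in other words the norm of a based quadratic algebra is the quadratic form $(a,b) \mapsto a^2 + tab + nb^2$. I will also use that, since $f$ is $\A$-linear with $f(x) = ux + c$, we have $f(a + bx) = (a + bc) + (bu)x$ for all $a,b\in\A$.

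The implication $(1)\Rightarrow(2)$ is immediate from the definitions, taking the base change to be the identity $\A\to\A$. For $(3)\Rightarrow(1)$: assuming $t' = ut + 2c$ and $n' = u^2 n + uct + c^2$, substitute into the formula above to get
\[ s_2\bigl(f(a + bx)\bigr) = (a + bc)^2 + t(a + bc)(bu) + n(bu)^2 = a^2 + (2c + ut)ab + (c^2 + uct + u^2 n)b^2, \]
and condition $(3)$ rewrites the last expression as $a^2 + t'ab + n'b^2 = s_2(a + bx)$, so $f$ is norm-preserving. Since the two equations in $(3)$ are polynomial identities in $t,n,t',n',u,c\in\A$, they continue to hold after any base change $\A\to\A''$, under which $f$ still carries $x$ to $ux+c$; hence the base-changed map is norm-preserving by the same computation, and $f$ is universally norm-preserving.

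The substance is $(2)\Rightarrow(3)$. Applying the norm-preservation identity $s_2(f(\xi)) = s_2(\xi)$ to the general element $\xi = a + bx$ of $\quadalg[\A]{t'}{n'}$ gives, for all $a,b\in\A$,
\[ a^2 + (2c + ut)ab + (c^2 + uct + u^2 n)b^2 = a^2 + t'ab + n'b^2. \]
Setting $(a,b) = (0,1)$ yields $n' = c^2 + uct + u^2 n$, and then setting $(a,b) = (1,1)$ and cancelling yields $t' = 2c + ut$, which is exactly $(3)$. The only point requiring care is that this ``comparison of coefficients'' must be carried out by substituting specific elements of $\A$ rather than by treating $a$ and $b$ as indeterminates, so that the argument is valid over an arbitrary commutative ring and in particular makes no use of $2$ being a non-zero-divisor.

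I anticipate no genuine obstacle: once the norm of a based quadratic algebra is identified with the quadratic form $a^2 + tab + nb^2$, the lemma is bookkeeping, and the only thing to watch is that nowhere does one divide by $2$ or by $u$.
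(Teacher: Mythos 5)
Your proof is correct, and it is self-contained in a way the paper's is not. The core of $(2)\Rightarrow(3)$ is essentially the same computation in both: the paper expands $s_2(ux+c)$ via the polarized forms $s_{1,1}$ and recovers the trace condition from the identity $s_{1,1}(d,1)=s_2(d+1)-s_2(d)-1$, which is exactly your evaluation at $(a,b)=(1,1)$ after subtracting the $(0,1)$ case; you just work with the explicit quadratic form $s_2(a+bx)=a^2+tab+nb^2$ instead of the polarized-form formalism, and you are right that substituting specific elements (rather than comparing coefficients of indeterminates) is the correct way to make this valid over an arbitrary ring. The genuine divergence is in $(3)\Rightarrow(1)$: the paper observes that $(3)$ says $\lambda^2-t'\lambda+n'$ is the characteristic polynomial of $ux+c$ and then invokes a result from \cite{BieselGioia} that for any element $b$ of a rank-$n$ algebra the map $\A[x]/(p_b(x))\to\B$, $x\mapsto b$, is universally norm-preserving; you instead verify norm-preservation directly from the closed form of $s_2$ and note that the polynomial identities in $(3)$, the formula for $s_2$, and the description of $f$ all persist under base change. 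Your route avoids the external citation at the cost of being special to rank $2$, which is all the lemma needs; the paper's route is the one that generalizes to higher-rank characteristic-polynomial quotients.
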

In particular, since quadratic algebras can all locally be given a based structure, all norm-preserving homomorphisms of quadratic algebras are universally norm-preserving.

\begin{proof}
 $(1)\implies (2)$ is trivial. For $(2)\implies(3)$, suppose that $f$ is norm-preserving. Then the norms of $x\in\quadalg{t'}{n'}$ and $f(x)=ux+c\in\quadalg[\A]{t}{n}$ are equal; namely
 \begin{align*}
  n' &= s_2(ux+c)\\
  &= s_2(ux) + s_{1,1}(ux,c) + s_2(c)\\
  &= u^2 s_2(x) + uc s_{1,1}(x,1) + c^2 s_2(1)\\
  &= u^2 n + uct + c^2.
 \end{align*}
 Now note that in any quadratic algebra, the trace of any element $d$ is $s_{1,1}(d,1) = s_2(d+1) - s_2(d) - 1$, so norm-preserving maps are also trace-preserving. Therefore the traces of $x\in\quadalg{t'}{n'}$ and $f(x)=ux+c\in\quadalg[\A]{t}{n}$ are also equal:
 \begin{align*}
  t' &= s_{1,1}(ux+c,1)\\
  &= u s_{1,1}(x, 1) + c s_{1,1}(1,1)\\
  &= ut + 2c.
 \end{align*}
Finally, for $(3)\implies(1)$ suppose that $t'=ut+2c$ and $n' = u^2n + uct + c^2$, so that $p_{ux+c}(\lambda) = \lambda^2 - t'\lambda+n'$ is the characteristic polynomial of $ux+c\in\quadalg{t}{n}$. But for any element $b$ of any rank-$n$ $\A$-algebra $\B$, the homomorphism $\A[x]/(p_b(x))\to\B$ sending $x$ to $b$ is universally norm-preserving by \cite{BieselGioia}.
\end{proof}

\begin{remark}
 In particular, if $\quadalg[\A]{t}{n}$ is isomorphic to $\quadalg[\A]{t'}{n'}$, there must exist $u,c\in\A$ with $u$ a unit, such that $t'=ut+2c$ and $n'=u^2t+utc+c^2$.
\end{remark}

\section{Norms of Based Quadratic Algebras}

We will now be considering rank-$n$ algebras $\A\to\B$ and the relationships between quadratic algebras over $\A$ and over $\B$. As a general convention for ease of context, we will tend to use lowercase letters like $s, t, m,u,c,\dots$ for elements of $\A$, and capital letters like $S, T, M, U, C,\dots$ for elements of $\B$.

As a bit of motivation for the definition of the norm of a quadratic algebra, suppose we have a rank-$n$ algebra $\A\to\B$ and a based quadratic $\B$-algebra $\quadalg[\B]{T}{N}$. We want the discriminant of the norm of $\quadalg{T}{N}$ to be the norm of the discriminant of $\quadalg[\B]{T}{N}$, namely $s_n(T^2 - 4N)$. So if we have $\norm_{\B/\A}\quadalg[\B]{T}{N} = \quadalg[\A]{t}{m}$, we had better have the identity
\[t^2 - 4m = s_n(T^2 - 4N).\]
Is there a canonical way to write $s_n(T^2 - 4N)$ as a square minus a multiple of four? Yes: we can use the polarized forms of $s_n$ to expand out $s_n(T^2 - 4N)$:
\begin{align*}
 s_n( - 4N + T^2) &= \sum_{k=0}^n s_{k, n-k}(-4N, T^2)\\
 &= \sum_{k=0}^n (-4)^k s_{k, n-k}(N, T^2)\\
 &= s_n(T^2) + \sum_{k=1}^n (-4)^k s_{k, n-k}(N, T^2),\\
 \intertext{since $s_{0,n}(N, T^2) = s_n(T^2)$ by \cref{lem:polarized-identity} (degeneracy),}
 &= \bigl[s_n(T)\bigr]^2 - 4\left[\sum_{k=1}^n (-4)^{k-1} s_{k, n-k}(N, T^2)\right],
\end{align*}
where we have used \cref{lem:multiplicative-sn} to rewrite $s_n(T^2)$ as $s_n(T)^2$. If we then let $t = s_n(T)$ and $m = \sum_{k=1}^n (-4)^{k-1} s_{k, n-k}(N, T^2)$, we have $t^2 - 4m = s_n(T^2 - 4N)$ as desired, and $\quadalg[\A]{t}{m}$ is a good candidate for the norm of $\quadalg[\B]{T}{N}$. That is indeed the definition we will use for the norm of a based quadratic algebras:

\begin{definition}\label{def:norm-based}
 Let $\B$ be a rank-$n$ $\A$-algebra, and let $\quadalg[\B]TN$ be a based quadratic $\B$-algebra. Then we define the \emph{norm} of $\quadalg[\B]{T}{N}$ to be the based quadratic $\A$-algebra
 \[\norm_{\B/\A}\quadalg[\B]{T}{N} \coloneqq \bigquadalg[\A]{s_n(T)}{\sum_{k=1}^n (-4)^{k-1}s_{k, n-k}(N, T^2)}.\]
 Noting that over the polynomial ring $\A[\lambda]$ we have 
 \[s_n(\lambda N + T^2) = \sum_{k=0}^n \lambda^k s_{k, n-k}(N, T^2),\]
 and therefore
 \[\sum_{k=1}^n \lambda^{k-1} s_{k, n-k}(N, T^2) = \frac{s_n(\lambda N + T^2) - s_n(T^2)}{\lambda},\]
 we will sometimes write the norm of $\quadalg[\B]{T}{N}$ as 
 \[\norm_{\B/\A}\quadalg[\B]{T}{N} = \bigquadalg[\A]{s_n(T)}{\left.\frac{s_n(\lambda N + T^2) - s_n(T^2)}{\lambda}\right|_{\lambda=-4}}\]
 to denote that the norm entry can be calculated as that fraction in $\A[\lambda]$ and then evaluated in $\A$ along the map $\A[\lambda]\to\A:\lambda\mapsto -4$.
\end{definition}

\begin{example}
 Let $\A$ be a ring in which $2$ is a unit, such as a number field. Then by completing the square, all based quadratic algebras over $\A$ can be written in the form $\A[\sqrt{d}] = \A[x]/(x^2-d) = \quadalg[\A]{0}{-d}$. What are the norms of based quadratic algebras of this type? If we have a rank-$n$ algebra $\A\to \B$ (such as a degree-$n$ extension of number fields), and a quadratic $\B$-algebra $\B[\sqrt{D}]$, then its norm is the quadratic $\A$-algebra
 \begin{align*}
  \norm_{\B/\A}\quadalg[\B]{0}{-D} &= \bigquadalg[\A]{s_n(0)}{\left.\frac{s_n(\lambda(-D) + (0)^2) - s_n((0)^2)}{\lambda}\right|_{\lambda=-4}}\\
  &= \bigquadalg[\A]{0}{\left.\frac{s_n(-\lambda D)}{\lambda}\right|_{\lambda=-4}}\\
  &= \quadalg[\A]{0}{\left.(-1)^n \lambda^{n-1} s_n(D)\right|_{\lambda=-4}}\\
  &= \quadalg[\A]{0}{-4^{n-1} s_n(D)}\\
  &= \A[\sqrt{4^{n-1} s_n(D)}] = \A[2^{n-1} \sqrt{s_n(D)}]\\
  &\cong \A[\sqrt{s_n(D)}]
 \end{align*}
since $2$ is a unit in $\A$. So the norm of $\B[\sqrt{D}]$ is $\A[\sqrt{s_n(D)}]$.
\end{example}

Next we have a few examples that begin to show how this norm operation is related to the monoid operation $\ast$ on quadratic algebras in \cite{Voight}, which is defined locally by

\begin{equation}\label{eq:ast}
\begin{gathered}
 \A[x]/(x^2 - sx + m) \ast \A[x]/(x^2 - tx + n) \coloneqq \\
 \A[x]/(x^2 - (st)x + (mt^2 + ns^2 - 4mn)),
 \end{gathered}
\end{equation}
i.e.\ $\quadalg{s}{m} \ast \quadalg{t}{n} \coloneqq \quadalg{st}{mt^2 + ns^2 - 4mn}$. This monoid operation is (up to isomorphism) commutative and associative, with a two-sided identity given by the split algebra $\quadalg[\A]{1}{0}\cong \A\times \A$ and a zero (absorbing) element given by the degenerate algebra $\quadalg[\A]{0}{0} \cong \A[\eps]/(\eps^2)$.

\begin{example}
 If $\A\to\B$ is an arbitrary rank-$n$ algebra, then we have $\norm_{\B/\A}\quadalg[\B]{1}{0} = \quadalg[\A]{1}{0}$ and $\norm_{\B/\A}\quadalg[\B]{0}{0} = \quadalg[\A]{0}{0}$. In other words, the norm of the split quadratic algebra $\B\times\B$ is again the split algebra $\A\times\A$, and the norm of the degenerate quadratic algebra $\B[\eps]/(\eps^2)$ is the degenerate algebra $\A[\eps]/(\eps^2)$. This is part of what it means for the norm functor on quadratic algebras to be a homomorphism of monoids-with-zero under $\ast$.
\end{example}

We will prove in \cref{thm:homomorphism} that the norm functor is truly a monoid homomorphism, and there we will use the following example showing how the monoid operation $\ast$ is actually a special case of the norm:

\begin{example}\label{ex:norm-product}
Given two based quadratic $\A$-algebras $\quadalg[\A]SM$ and $\quadalg[\A]TN$, we can consider their ordinary cartesian product as a based quadratic $\A\times\A$-algebra 
\[\quadalg[\A]SM \times \quadalg[\A]TN \cong \quadalg[\A\times\A]{(S,T)}{(M,N)}.\]
 Since $\A\to\A\times\A$ is an algebra of rank $n=2$, we can take the norm of this quadratic $\A\times\A$ algebra to get a quadratic $\A$-algebra:
\begin{align*}
 \norm\quadalg{(S,T)}{(M,N)} &= \quadalg{s_2((S,T))}{s_{1,1}((M,N), (S,T)^2) - 4 s_2((M,N))}
\end{align*}
To evaluate these entries we look at the matrices by which $(S,T)$ and $(M,N)$ act on $\A\times\A$ with respect to the standard basis $\{(1,0), (0,1)\}$. They act by diagonal matrices:
\[(S,T)\colon \begin{pmatrix}S & 0\\0 & T\end{pmatrix}\qquad (M,N)\colon \begin{pmatrix}M & 0 \\ 0 & N\end{pmatrix}\]
so we have $s_2((S,T)) = \det \begin{psmallmatrix}S & 0\\0 & T\end{psmallmatrix} = ST$ and $s_2((M,N)) = \det \begin{psmallmatrix}M & 0\\0 & N\end{psmallmatrix} = MN$.

We also need $s_{1,1}((M,N), (S,T)^2)$, which is the coefficient of $\lambda\mu$ in $s_2(\lambda(M,N) + \mu(S,T)^2)$, i.e. the coefficient of $\lambda\mu$ in
\begin{align*}
\det\begin{pmatrix} \lambda M + \mu S^2 & 0 \\ 0 & \lambda N + \mu T^2\end{pmatrix} &= (\lambda M + \mu S^2)(\lambda N + \mu T^2)\\
&= \lambda^2 MN + \lambda\mu(MT^2 + NS^2) + \mu^2 S^2T^2, 
\end{align*}
so $s_{1,1}((M,N), (S,T)^2) = MT^2 + NS^2$.
Therefore the norm of $\quadalg{S}{M}\times\quadalg{T}{N}$ is
\begin{align*}
 \norm\quadalg{(S,T)}{(M,N)} &= \quadalg{s_2((S,T))}{s_{1,1}((M,N), (S,T)^2) - 4 s_2((M,N))}\\
 &= \quadalg{ST}{MT^2 + NS^2 - 4MN},
\end{align*}
which is the composite quadratic algebra $\quadalg{S}{M} \ast \quadalg{T}{N}$ defined by the monoid operation $\ast$ in \cref{eq:ast}.
so the norm of a product of two quadratic algebras is their composite under the monoid operation $\ast$. Similarly, the norm of the quadratic $\A\times\A\times\A$-algebra $\quadalg{S}{M} \times \quadalg{T}{N}\times \quadalg{U}{P}$ would be the quadratic $\A$-algebra $\quadalg{S}{M} \ast \quadalg{T}{N}\ast \quadalg{U}{P}$, and so on.
\end{example}

Next we show that the norm operation is invariant under base change and is transitive along towers of algebras:

\begin{theorem}\label{thm:basechange-based}
 The norm operation on based quadratic algebras commutes with base change: Let $\A\to\B$ be a rank-$n$ algebra, and let $\quadalg[\B]TN$ be a based quadratic $\B$-algebra. If $\C$ is any $\A$-algebra, then $\C\to\C\otimes_\A \B$ is another rank-$n$ algebra, and
 \[\norm_{(\C\otimes_\A \B)/\C} \quadalg{T}{N} \cong \C\otimes_\A \norm_{\B/\A}\quadalg TN.\]
 \end{theorem}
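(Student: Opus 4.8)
The plan is to unwind both sides of the claimed isomorphism into explicit presentations of based quadratic $\C$-algebras, at which point the identity reduces to the single fact — recorded in the remark following \cref{def:s_n} — that $s_n$ is a polynomial law and hence commutes with base change.

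First I would fix notation. Write $t = s_n(T) \in \A$ and $m = \sum_{k=1}^n (-4)^{k-1} s_{k,n-k}(N, T^2) \in \A$, so that \cref{def:norm-based} reads $\norm_{\B/\A}\quadalg[\B]{T}{N} = \bigquadalg[\A]{t}{m}$, and let $\bar t, \bar m \in \C$ be the images of $t, m$ along the structure map $\A\to\C$. Since $\bigquadalg[\A]{t}{m} = \A[x]/(x^2-tx+m)$, right-exactness of $\C\otimes_\A(-)$ gives $\C\otimes_\A\bigquadalg[\A]{t}{m}\cong\C[x]/(x^2-\bar t x + \bar m) = \bigquadalg[\C]{\bar t}{\bar m}$, which is the right-hand side. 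For the left-hand side, note first that $\C\to\C\otimes_\A\B$ is again rank $n$ (projectivity and constant rank are stable under base change); moreover, because $\B\to\C\otimes_\A\B$ is a ring homomorphism, the base change $(\C\otimes_\A\B)\otimes_\B\quadalg[\B]{T}{N}\cong\C\otimes_\A\quadalg[\B]{T}{N}$ is again a based quadratic algebra, namely $\bigquadalg[\C\otimes_\A\B]{\bar{T}}{\bar{N}}$ where $\bar{T}, \bar{N}$ denote the images of $T, N$ in $\C\otimes_\A\B$. Applying \cref{def:norm-based} to the rank-$n$ algebra $\C\to\C\otimes_\A\B$ therefore yields $\norm_{(\C\otimes_\A\B)/\C}\bigquadalg{\bar{T}}{\bar{N}} = \bigquadalg[\C]{s_n(\bar{T})}{\sum_{k=1}^n(-4)^{k-1}s_{k,n-k}(\bar{N},\bar{T}^2)}$, where now $s_n$ and its polarized forms are those of $\C\to\C\otimes_\A\B$.

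It then remains to match entries, i.e. to show $s_n(\bar{T}) = \bar t$ and $\sum_{k=1}^n(-4)^{k-1}s_{k,n-k}(\bar{N},\bar{T}^2) = \bar m$ in $\C$. The first is precisely base-change compatibility of $s_n$ along $\A\to\C$. For the polarized forms, recall $s_{k,n-k}(N,T^2)$ is the coefficient of $\lambda^k\mu^{n-k}$ in $s_n(\lambda N+\mu T^2)\in\A[\lambda,\mu]$; base changing along $\A[\lambda,\mu]\to\C[\lambda,\mu]$ carries this polynomial to $s_n(\lambda\bar{N}+\mu\bar{T}^2)$ by base-change compatibility of $s_n$ applied to $\C[\lambda,\mu]\to(\C\otimes_\A\B)[\lambda,\mu]$, while extraction of the $\lambda^k\mu^{n-k}$-coefficient commutes with the coefficientwise map $\A[\lambda,\mu]\to\C[\lambda,\mu]$; also $\overline{T^2}=\bar{T}^2$ since $\B\to\C\otimes_\A\B$ is multiplicative. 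As $\A\to\C$ is additive it commutes with the finite sum $\sum_{k=1}^n(-4)^{k-1}(-)$, giving $\sum_{k=1}^n(-4)^{k-1}s_{k,n-k}(\bar{N},\bar{T}^2)=\bar m$. Hence the left-hand side equals $\bigquadalg[\C]{\bar t}{\bar m}$, matching the right-hand side. I do not anticipate a real obstacle: the only substantive input is the polynomial-law property of $s_n$, which is cited rather than reproved, and the remaining care is purely bookkeeping — chiefly making sure that ``$\quadalg{T}{N}$ as a $\C\otimes_\A\B$-algebra'' is read as the base change $\C\otimes_\A\quadalg[\B]{T}{N}$, so that its distinguished generator and its trace and norm entries are the images of the original ones. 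With that convention the two sides coincide on the nose rather than merely up to isomorphism.
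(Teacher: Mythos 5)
Your proposal is correct and follows the same route as the paper: the paper's proof is simply the observation that $s_n$ and its polarized forms, being a polynomial law, commute with base change, so the two based quadratic $\C$-algebras agree on the nose. Your write-up just makes the bookkeeping explicit; no new idea is needed or introduced.
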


\begin{proof}
 Since $s_n$ is a polynomial law, it and its polarized forms commute with base change, so the two based quadratic $\C$-algebras agree exactly.
\end{proof}

\begin{theorem}\label{thm:transitive-based}
 The norm operation on based quadratic algebras is transitive: If $\A\to \B$ is a rank-$n$ algebra and $\B\to \C$ is a rank-$m$ algebra, and if $\quadalg[\C]{T}{N}$ is a based quadratic $\C$-algebra, then $\norm_{\C/\A}(\quadalg[\C]{T}{N}) \cong \norm_{\B/\A}(\norm_{\C/\B}(\quadalg[\C]{T}{N}))$ as based quadratic $\A$-algebras.
\end{theorem}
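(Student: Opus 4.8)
The plan is to show that the two based quadratic $\A$-algebras appearing in the statement are in fact \emph{equal}, not merely isomorphic: since a based quadratic algebra is literally the datum of an ordered pair of ring elements, it suffices to check that the ``trace'' entry and the ``norm'' entry of $\norm_{\C/\A}\quadalg[\C]{T}{N}$ agree with those of $\norm_{\B/\A}\bigl(\norm_{\C/\B}\quadalg[\C]{T}{N}\bigr)$. Write $T' := s_m(T)\in\B$ and $N' := \sum_{k=1}^m(-4)^{k-1}s_{k,m-k}(N,T^2)\in\B$, so that $\norm_{\C/\B}\quadalg[\C]{T}{N} = \quadalg[\B]{T'}{N'}$ by \cref{def:norm-based}. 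For the trace entries I must show $s_{mn}(T) = s_n(T')$, which is exactly \cref{lem:transitive-sn} applied to $T\in\C$. The content of the theorem is therefore the equality of norm entries,
\[
\sum_{k=1}^{mn}(-4)^{k-1}s_{k,mn-k}(N,T^2)
\;=\;
\sum_{k=1}^{n}(-4)^{k-1}s_{k,n-k}\bigl(N',(T')^2\bigr)
\qquad\text{in }\A.
\]

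I would prove this using the generating-function formulation of \cref{def:norm-based}: working in $\A[\lambda]$, the left-hand side is $\bigl(s_{mn}(\lambda N+T^2)-s_{mn}(T^2)\bigr)/\lambda$ specialized at $\lambda=-4$. Apply transitivity of the norm functions (\cref{lem:transitive-sn}, over the tower $\A[\lambda]\to\B[\lambda]\to\C[\lambda]$ obtained by base change along $\A\to\A[\lambda]$, whose ranks are still $n$, $m$, $mn$) to rewrite $s_{mn}(\lambda N+T^2)=s_n\bigl(s_m(\lambda N+T^2)\bigr)$. Inside, $s_m(\lambda N+T^2)=\sum_{k=0}^m\lambda^k s_{k,m-k}(N,T^2)=(T')^2+\lambda P(\lambda)$, where $P(\lambda):=\sum_{k=1}^m\lambda^{k-1}s_{k,m-k}(N,T^2)\in\B[\lambda]$ satisfies $P(-4)=N'$, and where $s_m(T^2)=(T')^2$ by \cref{lem:multiplicative-sn}. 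Expanding the outer norm via the polarized forms and pulling out powers of $\lambda$ by homogeneity (\cref{lem:polarized-identity}),
\[
s_n\bigl(\lambda P(\lambda)+(T')^2\bigr)=\sum_{i=0}^n\lambda^i\,s_{i,n-i}\bigl(P(\lambda),(T')^2\bigr),
\]
with the $\lambda^0$ term $s_n((T')^2)$ equal to $s_{mn}(T^2)$ (set $\lambda=0$, or compute directly from \cref{lem:multiplicative-sn} and \cref{lem:transitive-sn}). Subtracting $s_{mn}(T^2)$ and dividing by $\lambda$ leaves $\sum_{i=1}^n\lambda^{i-1}s_{i,n-i}(P(\lambda),(T')^2)$; specializing at $\lambda=-4$ and using that each polarized form $s_{i,n-i}$ commutes with the base change $\lambda\mapsto-4$ (so that $s_{i,n-i}(P(\lambda),(T')^2)\big|_{\lambda=-4}=s_{i,n-i}(N',(T')^2)$) produces exactly the right-hand side, since the $i$-th term is then $(-4)^{i-1}s_{i,n-i}(N',(T')^2)$.

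The step demanding the most care is the bookkeeping with the auxiliary variable $\lambda$: I need that each $s_{i,n-i}$, as a component of the polynomial law $s_n$, is homogeneous of degree $i$ in its first slot and commutes with the specialization $\lambda\mapsto-4$, and that the identity of \cref{lem:transitive-sn}, being an identity of polynomial laws, survives adjoining $\lambda$ — all routine, but to be invoked explicitly. I would also note why discriminants cannot be used as a shortcut: both algebras do have discriminant $s_{mn}(T^2-4N)$ (from the computation motivating \cref{def:norm-based} together with \cref{lem:transitive-sn}), but over a base ring in which $4$ is a zero divisor the pair (trace, discriminant) does not determine a based quadratic algebra, so the equality of norm entries genuinely has to be checked as above.
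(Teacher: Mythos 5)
Your proof is correct and follows essentially the same route as the paper's: both reduce the trace entries to \cref{lem:transitive-sn} and handle the norm entries by keeping the auxiliary variable $\lambda$ (the paper's $\mu$) as an indeterminate, invoking transitivity of $s_n$ over the base-changed tower, and specializing to $-4$ only at the end. The paper simplifies the nested fraction expression top-down while you expand the single $\C/\A$ expression via polarized forms, but the computation and the ingredients are the same.
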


\begin{proof}
 We check that the traces and norms of the canonical generators of $\norm_{\C/\A}\quadalg{T}{N}$ and $\norm_{\B/\A}\norm_{\C/\B}\quadalg{T}{N}$ agree. First, the traces of $x\in\norm_{\B/\A}\norm_{\C/\B}\quadalg{T}{N}$ and $x\in\norm_{\C/\A}\quadalg{T}{N}$ are $s_n(s_m(T))= s_{mn}(T)$. 
  Second, the norm of $x\in\norm_{\B/\A}\norm_{\C/\B}\quadalg{T}{N} 
 $  is
 \begin{gather*}
  \left.\frac{s_n\left(\mu\left[\frac{s_m(\lambda N + T^2) - s_m(T^2)}{\lambda}\right]_{\lambda=-4}+s_m(T)^2\right) - s_n(s_m(T)^2)}{\mu}\right|_{\mu=-4},\\
  \intertext{in which, since we are setting $\mu$ equal to $-4$ anyway, we might as well set $\lambda$ equal to $\mu$ instead of $-4$:}
  \begin{aligned} 
  &=  \left.\frac{s_n\left(\mu\left[\frac{s_m(\lambda N + T^2) - s_m(T^2)}{\lambda}\right]_{\lambda=\mu}+s_m(T)^2\right) - s_n(s_m(T)^2)}{\mu}\right|_{\mu=-4}\\
  &= \left.\frac{s_n\left(\mu\left[\frac{s_m(\mu N + T^2) - s_m(T^2)}{\mu}\right]+s_m(T)^2\right) - s_n(s_m(T)^2)}{\mu}\right|_{\mu=-4}\\
  &= \left.\frac{s_n\left([s_m(\mu N + T^2) - s_m(T^2)]+s_m(T)^2\right) - s_n(s_m(T)^2)}{\mu}\right|_{\mu=-4}\\
&= \left.\frac{s_{mn}(\mu N + T^2) - s_{mn}(T^2)}{\mu}\right|_{\mu=-4},\\
\end{aligned}
\end{gather*}
which is the norm of $x$ in $\norm_{\C/\A}\quadalg{T}{N}$, as desired.
\end{proof}

\cref{thm:transitive-based} may be understood as a kind of functoriality of the norm map with respect to the underlying algebras. In order to extend the norm operation to all quadratic algebras, we will also need a functoriality with respect to isomorphisms of based quadratic algebras. In \cref{prop:different-generator} we define how the norm map acts on general norm-preserving homomorphisms of quadratic algebras, and in \cref{prop:functorial} we prove that this norm map preserves composition of homomorphisms. The combinatorial argument at the heart of \cref{prop:different-generator} is the key result that makes this norm functor well-defined.

\begin{proposition}\label{prop:different-generator}
 Let $\B$ be a rank-$n$ $\A$-algebra, and let $\quadalg TN$ and $\quadalg{T'}{N'}$ be based quadratic $\B$-algebras. If we have an isomorphism (resp.\ norm-preserving homomorphism) of $\B$-algebras
\[\begin{gathered}
f\colon \quadalg{T'}{N'} \to \quadalg TN\\
: x \mapsto Ux + C,
\end{gathered}\]
  then we also have an isomorphism (resp.\ norm-preserving homomorphism)
 \begin{equation}\label{eq:hom-image}
  \begin{gathered}
\norm_{\B/\A}(f)\colon \norm_{\B/\A}\quadalg{T'}{N'} \to \norm_{\B/\A}\quadalg TN\\
: x \mapsto s_n(U) x + \sum_{k=1}^n 2^{k-1} s_{k, n-k}(C, UT).
\end{gathered}
\end{equation}
We will also write the constant term above as $\left.\frac{s_n(\lambda C + UT) - s_n(UT)}{\lambda}\right|_{\lambda=2}$, to indicate that it is the image under the map $\A[\lambda]\to \A:\lambda\mapsto 2$ of the element $(s_n(\lambda C + UT) - s_n(UT))/\lambda$.
\end{proposition}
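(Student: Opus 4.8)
The plan is to reduce \cref{prop:different-generator} to two scalar identities in $\A$ and then verify those by manipulating the polarized norm forms.

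\textbf{Reduction.} First I would unpack the hypothesis. Since $f\colon\quadalg{T'}{N'}\to\quadalg{T}{N}$ carries $x\mapsto Ux+C$, the element $Ux+C$ must satisfy $X^2-T'X+N'=0$ in $\quadalg{T}{N}$; expanding using $x^2=Tx-N$ shows this is equivalent to $T'=UT+2C$ and $N'=U^2N+UCT+C^2$, and $f$ is an isomorphism exactly when $U\in\B^\times$ (conversely any such pair $(U,C)$ yields such an $f$). Write $\norm_{\B/\A}\quadalg{T}{N}=\quadalg[\A]{s_n(T)}{m}$ and $\norm_{\B/\A}\quadalg{T'}{N'}=\quadalg[\A]{s_n(T')}{m'}$, and put $u=s_n(U)$, $c=\sum_{k=1}^n 2^{k-1}s_{k,n-k}(C,UT)$. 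By the earlier lemma characterizing (universally) norm-preserving homomorphisms of based quadratic algebras, applied over $\A$, the rule $x\mapsto ux+c$ defines a norm-preserving $\A$-algebra homomorphism $\quadalg[\A]{s_n(T')}{m'}\to\quadalg[\A]{s_n(T)}{m}$ as soon as $s_n(T')=u\,s_n(T)+2c$ and $m'=u^2m+uc\,s_n(T)+c^2$; moreover $u=s_n(U)$ is a unit whenever $U$ is, by multiplicativity of $s_n$ (\cref{lem:multiplicative-sn}), which also disposes of the isomorphism case. So everything comes down to those two identities.

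\textbf{The trace identity.} This one is short: $s_n(T')=s_n(UT+2C)=\sum_{k=0}^n 2^k s_{k,n-k}(C,UT)$ by the definition of the polarized forms, the $k=0$ term equals $s_{0,n}(C,UT)=s_n(UT)=s_n(U)s_n(T)$ by degeneracy (\cref{lem:polarized-identity}) and multiplicativity, and the remaining sum is exactly $2c$ (this also verifies the $\tfrac{s_n(\lambda C+UT)-s_n(UT)}{\lambda}\big|_{\lambda=2}$ description of $c$).

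\textbf{The norm identity.} Here lies the combinatorial heart. Using the relations on $T',N'$ one computes
\[\lambda N'+T'^2=U^2(\lambda N+T^2)+(\lambda+4)\,C(UT+C)\]
in $\B[\lambda]$. Applying the three-variable polarization of $s_n$ to the triple $\bigl(U^2T^2,\,U^2N,\,C(UT+C)\bigr)$ with coefficients $1,\lambda,\lambda+4$, then dividing by $\lambda$ \emph{formally} in $\A[\lambda]$ and only afterwards setting $\lambda=-4$: the terms not involving $C(UT+C)$ reassemble (via degeneracy, reordering and multiplicativity of the polarized forms) into $u^2m$, leaving $m'=u^2m+\sum_{k=1}^n 4^{k-1}s_{n-k,k}\bigl(U^2T^2,\,C(UT+C)\bigr)$. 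It remains to identify the last sum with $uc\,s_n(T)+c^2$. Writing $V=UT$ and $\gamma_j=s_{n-j,j}(V,C)$, so $u\,s_n(T)=s_n(V)=\gamma_0$ and $c=\sum_{j\ge1}2^{j-1}\gamma_j$, I would pass to the rank-$2$ extension $\A[\lambda]\to\A[\lambda][z]/(z^2-\lambda z+\lambda)$: there $V^2+\lambda C(V+C)=(V+\mu C)(V+\nu C)$ for the two roots $\mu,\nu$, so multiplicativity gives $s_n\!\bigl(V^2+\lambda C(V+C)\bigr)=s_n(V+\mu C)\,s_n(V+\nu C)=\sum_{j,l}\gamma_j\gamma_l\,\mu^j\nu^l$, which is symmetric in $\mu,\nu$ and hence a polynomial in $\lambda$ (Newton's identities, with $\mu+\nu=\mu\nu=\lambda$). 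Dividing by $\lambda$ formally and setting $\lambda=4$ — where $z^2-4z+4=(z-2)^2$ has the double root $2$, so $\mu^r+\nu^r=2^{r+1}$ — and comparing $\gamma_j\gamma_l$-coefficients with $\gamma_0c+c^2=\bigl(\sum_{j\ge1}2^{j-1}\gamma_j\bigr)\bigl(\gamma_0+\sum_{l\ge1}2^{l-1}\gamma_l\bigr)$ reduces the whole identity to the elementary power-sum fact $p_r(4,4)=2^{r+1}$.

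\textbf{Main obstacle.} The delicate point is precisely the norm identity, specifically getting it exactly rather than up to a factor of $4$. The tempting ``discriminant'' shortcut $T'^2-4N'=U^2(T^2-4N)$, giving $s_n(T'^2-4N')=s_n(U)^2s_n(T^2-4N)$ and hence only $4m'=4\bigl(u^2m+uc\,s_n(T)+c^2\bigr)$, is insufficient when $4$ is a zero-divisor in $\A$ (e.g.\ in characteristic $2$). Performing every division formally \emph{by $\lambda$} in $\A[\lambda]$, rather than numerically by $4$, is exactly what turns the final comparison into an integral combinatorial identity; carrying the polarized-form bookkeeping correctly through that reduction is the real work.
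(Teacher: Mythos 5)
Your proof is correct, but it takes a genuinely different route from the paper's. The paper factors $f$ as a translation $x\mapsto x+C$ followed by a scaling $x\mapsto Ux$ and, invoking the functoriality of the norm on homomorphisms (proved in \cref{prop:functorial}), treats the two cases separately: the scaling case is immediate from multiplicativity of the polarized forms, while the translation case is settled by a lengthy multinomial expansion of $s_{m,n-m}(N+CT+C^2,(T+2C)^2)$ and a page of binomial-coefficient cancellations showing that both sides equal the common symmetric expression \cref{eq:norm-translate}. You instead keep $U$ and $C$ together, reduce to the trace and norm identities over $\A$ via the characterization of norm-preserving homomorphisms, and dispatch the norm identity using the identity $\lambda N'+T'^2=U^2(\lambda N+T^2)+(\lambda+4)C(UT+C)$ together with the factorization $V^2+\lambda C(V+C)=(V+\mu C)(V+\nu C)$ over the auxiliary extension $\A[\lambda][z]/(z^2-\lambda z+\lambda)$, which converts all the binomial bookkeeping into the evaluation of power sums at $e_1=e_2=4$ (I checked the key specializations $p_j/\lambda\mapsto 2^{j-1}$ and $\mu^{j-1}\nu^{l-1}\mapsto 2^{j+l-2}$; they are right). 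Your version avoids both the case split and the forward reference to functoriality, at the cost of introducing the auxiliary quadratic extension; both proofs share the essential precaution of dividing by $\lambda$ formally before specializing, which is what makes the argument valid when $4$ is a zero-divisor. One small imprecision: being a $\B$-algebra homomorphism only forces $U(UT+2C-T')=0$ and $N'=T'C-C^2+U^2N$, not $T'=UT+2C$ outright when $U$ is a zero-divisor; but the relations you need do follow from the norm-preserving hypothesis via the paper's earlier lemma, which you cite, so nothing is lost.
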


\begin{proof}
We prove the more general claim in the case that $f$ is a norm-preserving homomorphism, noting that if $f$ is an isomorphism, then $U$ is a unit in $\B$, so $s_n(U)$ is a unit in $\A$ and $\norm_{\B/\A}(f)$ is an isomorphism too.

First let $\tilde T = UT$ and $\tilde N = U^2N$. We can then decompose the norm-preserving map $\quadalg{T'}{N'}\to\quadalg{T}{N}$ into two maps $\quadalg{T'}{N'}\to\quadalg{\tilde T}{\tilde N}$ sending $x\mapsto x+C$ and $\quadalg{\tilde T}{\tilde N}\to\quadalg{T}{N}$ sending $x\mapsto Ux$. We will show immediately after this proposition that the norm operation on norm-preserving homomorphisms is functorial; it therefore suffices to show that \cref{eq:hom-image} provides a norm-preserving homomorphism between the norm algebras in the cases that $C=0$ or $U=1$.

 First consider the case $C=0$, so that we are considering the map $\quadalg{UT}{U^2 N}\to \quadalg{T}{N}$ sending $x\mapsto Ux$. We must show that the map $\norm\quadalg{UT}{U^2N}\to \norm\quadalg{T}{N}$ sending $x\mapsto s_n(U)x$ is also norm-preserving. Indeed, the trace of $x\in\norm\quadalg{UT}{U^2N}$ is $s_n(UT) = s_n(U)s_n(T)$, which is also the trace of $s_n(U)x\in \norm\quadalg{T}{N}$. And their norms match as well: the norm of $x\in \norm\quadalg{UT}{U^2N}$ is
 \begin{align*}
  \sum_{k=1}^n (-4)^{k-1} s_{k, n-k}(U^2N, (UT)^2) &= \sum_{k=1}^n (-4)^{k-1} s_{k, n-k}(U^2N, U^2T^2)\\
  &= \sum_{k=1}^n (-4)^{k-1} s_n(U^2) s_{k, n-k}(N,T^2)\\
  &= s_n(U)^2\sum_{k=1}^n (-4)^{k-1} s_{k, n-k}(N,T^2),
 \end{align*}
 which is the norm of $s_n(U)x\in\norm\quadalg{T}{N}$.

Now we consider the case $U=1$ but $C\neq 0$, so that we are considering the norm-preserving map $\quadalg{T+2C}{N+CT+C^2} \to \quadalg{T}{N}$ sending $x\mapsto x+C$, and we wish to show that the map $\norm\quadalg{T+2C}{N+CT+C^2}\to\norm\quadalg{T}{N}$ sending $x\mapsto x+\sum_{k=1}^n 2^{k-1} s_{k, n-k}(C, T)$ is also norm-preserving. The trace of $x$ in $\norm\quadalg{T+2C}{N+CT+C^2}$ is
\begin{align*}
 s_n(T+2C) &= \sum_{k=0}^n s_{k, n-k}(2C, T)\\
 &= \sum_{k=0}^n 2^k s_{k, n-k}(C, T)\text{ by homogeneity}\\
 &= s_n(T) + 2\sum_{k=1}^n 2^{k-1} s_{k, n-k}(C, T)
\end{align*}
which equals the trace of $x + \sum_{k=1}^n 2^{k-1} s_{k, n-k}(C, T)$ in $\norm\quadalg{T}{N}$ as desired.

Comparing norms will be more difficult; we will show separately that the norm of $x$ in $\norm\quadalg{T+2C}{N+CT+C^2}$ and the norm of $x+\sum_{k=1}^n 2^{k-1} s_{k, n-k}(C, T)$ in $\norm\quadalg{T}{N}$ are both equal to
\begin{equation}\label{eq:norm-translate}
 \sum_{i=1}^n (-4)^{i-1} s_{i, n-i}(N, T^2) + \sum_{\substack{j,k\in \NN:\\1 \leq j+k \leq n}} 4^{j+k-1} s_{j, k, n-j-k}(C^2, CT, T^2).
\end{equation}

 Consider first the norm of $x$ in $\norm\quadalg{T+2C}{N+CT+C^2}$. By definition it equals
 \begin{align*}
  &\sum_{m=1}^n (-4)^{m-1} s_{m, n-m}(N+CT+C^2, (T+2C)^2)\\
  &= \sum_{m=1}^n (-4)^{m-1} s_{m, n-m}(N+CT+C^2, T^2+4CT+4C^2)\\
  &= \sum_{m=1}^n(-4)^{m-1}\sum_{\substack{p,q,r,s,t,u\in\NN\\p+q+r=m\\s+t+u=n-m}}s_{p,q,r,s,t,u}(N, C^2, CT, 4C^2, 4CT, T^2)\\
  &= \sum_{m=1}^n(-4)^{m-1}\sum_{\substack{p,q,r,s,t,u\in\NN\\p+q+r=m\\s+t+u=n-m}}4^{s+t}s_{p,q,r,s,t,u}(N, C^2, CT, C^2, CT, T^2)\\
  &= \sum_{m=1}^n\sum_{\substack{p,q,r,s,t,u\in\NN\\p+q+r=m\\s+t+u=n-m}}(-4)^{m-1}4^{n-m-u}\binom{q+s}{s}\binom{r+t}{t}s_{p,q+s,r+t,u}(N, C^2, CT, T^2)
 \end{align*}
 by \cref{lem:polarized-identity} (combination).
We may then ask what the coefficient is of a given $s_{i,j,k,\ell}(N, C^2, CT, T^2)$ in the last sum, for natural numbers $i+j+k+\ell=n$. That coefficient is
\begin{align*}
 &\sum_{m=1}^n \sum_{\substack{q,r,s,t\in\NN\\i+q+r=m\\s+t+\ell=n-m\\q+s=j\\r+t=k}}(-4)^{m-1}4^{n-m-\ell}\binom{j}{s}\binom{k}{t}\\
 &= 4^{n-\ell-1}\sum_{m=1}^n (-1)^{m-1}\sum_{\substack{q,r,s,t\in\NN\\q+r=m-i\\s+t=n-m-\ell\\q+s=j\\r+t=k}}\binom{j}{s}\binom{k}{t}
\end{align*}
That inner sum over $q,r,s,t\in\NN$ is exactly counting the number of ways of forming a subset of size $m-i$ from a disjoint union of two sets of size $j$ and $k$: you must divide the set of size $j$ into sets of size $q$ and $s$ (for which there are $\binom{j}{s}$ possibilities), and the set of size $k$ into sets of size $r$ and $t$ (with $\binom{k}{t}$ possibilities), such that $q+r$ is the desired size $m-i$. Therefore that inner sum merely equals $\binom{j+k}{m-i}$, so the coefficient of $s_{i,j,k,\ell}(N,C^2,CT,T^2)$ is
\begin{align*}
 &= 4^{n-\ell-1}\sum_{m=1}^n (-1)^{m-1} \binom{j+k}{m-i}\\
 &= 4^{n-\ell-1}\sum_{m'=1-i}^{n-i} (-1)^{m'+i-1} \binom{j+k}{m'},
\end{align*}
where we have changed the index of summation to $m'=m-i$. Now that sum over $m'$ possibly includes several terms where the binomial coefficient vanishes because $m'$ is out of bounds. In particular, $n-i$ is always greater than or equal to $j+k$, so we might as well stop the sum at $m'=j+k$. Similarly, we may as well start the sum at $m'=0$, unless $i=0$, in which case the sum starts at $1$. So if $i>0$, the coefficient of $s_{i,j,k,\ell}(N, C^2, CT, T^2)$ is
\begin{align*}
&4^{n-\ell-1}(-1)^{i-1} \sum_{m'=0}^{j+k} (-1)^{m'}\binom{j+k}{m'}\\
&= 4^{n-\ell-1}(-1)^{i-1} (1-1)^{j+k}\\
 &= \begin{cases}0 &\text{if }j+k > 0\\
 4^{n-\ell-1}(-1)^{i-1} = (-4)^{i-1} &\text{if }j=k=0.
\end{cases}
\end{align*}
On the other hand, if $i=0$, the coefficient of $s_{0, j, k, \ell}(N,C^2, CT, T^2)$ is
\begin{align*}
 &\qquad 4^{n-\ell-1}(-1)\sum_{m'=1}^{j+k} \binom{j+k}{m'}\\
 &= -4^{n-\ell-1}[(1-1)^{j+k} - 1]\\
 &= \begin{cases}
  4^{n-\ell-1}=4^{j+k-1} &\text{if }j+k>0\\
  0 &\text{if }j=k=0.
 \end{cases}
\end{align*}
Putting it all together, and leaving out the terms with coefficient $0$, the norm of $x$ in $\norm\quadalg{T+2C}{N+CT+C^2}$ is
\[\sum_{i=1}^n (-4)^{i-1} s_{i, n-i}(N, T^2) + \sum_{\substack{j,k\in\NN\\1\leq j+k\leq n}} 4^{j+k-1}s_{j, k, n-j-k}(C^2, CT, T^2),\]
which matches \cref{eq:norm-translate} as desired.

Now we must show that this also agrees with the norm of $x+\sum_{k=1}^n 2^{k-1} s_{k, n-k}(C, T)$ in $\norm\quadalg{T}{N}$. This norm is
\begin{align*}
&\sum_{k=1}^n (-4)^{k-1} s_{k, n-k}(N, T^2)+ s_n(T)\left(\sum_{k=1}^n 2^{k-1} s_{k, n-k}(C,T)\right) + \left(\sum_{k=1}^n 2^{k-1} s_{k, n-k}(C,T)\right)^2.
\end{align*}
The first term agrees with that in \cref{eq:norm-translate}, so we must show that the second and third terms add up to $\sum_{j,k\in\NN: 1\leq j+k\leq n} 4^{j+k-1}s_{j,k,n-j-k}(C^2, CT, T^2)$. We begin by expanding the third term as
\begin{align*}
 \left(\sum_{k=1}^n 2^{k-1} s_{k, n-k}(C,T)\right)^2 &= \sum_{k, \ell=1}^n 2^{k+\ell - 2} s_{k, n-k}(C, T)s_{\ell, n-\ell}(C, T).
\end{align*}

Now $s_{k, n-k}(C, T)$ is the coefficient of $\lambda^k$ in $s_n(\lambda C + T)$, so $s_{k, n-k}(C, T)s_{\ell, n-\ell}(C, T)$ is the coefficient of $\lambda^k\mu^\ell$ in 
\begin{align*}
 s_n(\lambda C + T)s_n(\mu C + T) &= s_n(\lambda\mu C^2 + (\lambda+\mu)CT + T^2)\\
 &= \sum_{\substack{a,b,c\in\NN\\a + b + c = n}} s_{a,b,c}(\lambda\mu C^2, (\lambda+\mu)CT, T^2)\\
 &= \sum_{\substack{a,b,c\in\NN\\a + b + c = n}} (\lambda\mu)^a(\lambda+\mu)^b s_{a,b,c}(C^2, CT, T^2)\\
 &= \sum_{\substack{a,b,c\in\NN\\a + b + c = n}} \sum_{d=0}^b \lambda^{a+d}\mu^{a+b-d}\binom{b}{d}s_{a,b,c}(C^2, CT, T^2)
\end{align*}
whose coefficient of $\lambda^k\mu^\ell$ is therefore
\begin{align*}
s_{k,n-k}(C,T)s_{\ell,n-\ell}(C,T) &= \sum_{\substack{a,b,c,d\in\NN\\a+b+c=n\\a+d=k\\a+b-d=\ell}} \binom{b}{d}s_{a,b,c}(C^2, CT, T^2)\\
 &= \sum_{\substack{a,b,c\in\NN\\a+b+c=n\\k+\ell=2a+b}} \binom{b}{k-a}s_{a,b,c}(C^2, CT, T^2),
\end{align*}
so we have
\begin{align*}
 \left(\sum_{k=1}^n 2^{k-1} s_{k, n-k}(C,T)\right)^2 &= \sum_{k, \ell=1}^n 2^{k+\ell - 2} s_{k, n-k}(C, T)s_{\ell, n-\ell}(C, T).\\
 &= \sum_{k, \ell=1}^n 2^{k+\ell - 2} \sum_{\substack{a,b,c\in\NN\\a+b+c=n\\k+\ell=2a+b}} \binom{b}{k-a}s_{a,b,c}(C^2, CT, T^2)\\
 &= \sum_{\substack{a,b,c\in\NN\\a+b+c=n}} 2^{2a+b - 2} s_{a,b,c}(C^2, CT, T^2)\sum_{\substack{1\leq k, \ell \leq n\\k+\ell=2a+b}} \binom{b}{k-a}\\
 &= \sum_{\substack{a,b,c\in\NN\\a+b+c=n}} 2^{2a+b - 2} s_{a,b,c}(C^2, CT, T^2)\sum_{k=1}^{ 2a+b-1} \binom{b}{k-a}\\
\end{align*}
Now the value of that inner sum breaks into three cases depending on how many terms are missing from the sum $2^b = \sum_j \binom{b}{j}$: either $a = 0$ and $b > 0$, in which case we are missing the start and the end terms, giving
\[\sum_{k=1}^{2a+b-1}\binom{b}{k-a} = \sum_{k=1}^{b-1}\binom{b}{k} = 2^b - 2,\]
or $a=0$ and $b=0$, in which case the sum is empty and equals $0$, or $a>0$, in which case neither end term is missing:
\[\sum_{k=1}^{2a+b-1}\binom{b}{k-a} = \sum_{k'=1-a}^{a+b-1}\binom{b}{k'}=\sum_{k'=0}^{b}\binom{b}{k'}= 2^b.\]
So our expansion of $(\sum_{k=1}^n 2^{k-1}s_{k,n-k}(C,T))^2$ becomes
\begin{align*}
&\qquad \sum_{\substack{a,b,c\in\NN\\a+b+c=n}} 2^{2a+b - 2} s_{a,b,c}(C^2, CT, T^2)\sum_{k=1}^{ 2a+b-1} \binom{b}{k-a}\\
&= \sum_{\substack{a,b,c\in\NN\\a+b+c=n\\a+b\geq 1}} 2^{2a+b-2}(2^b) s_{a,b,c}(C^2, CT, T^2) - \sum_{\substack{a,b,c\in\NN\\a+b+c=n\\a=0,b>0}} 2^{b-2} (2) s_{a,b,c}(C^2,CT, T^2)\\
&= \sum_{\substack{a,b,c\in\NN\\a+b+c=n\\a+b\geq 1}} 2^{2a+2b-2}s_{a,b,c}(C^2, CT, T^2) - \sum_{b=1}^n 2^{b-1}s_{b,n-b}(CT, T^2).
\end{align*}
Therefore the sum $(\sum_{k=1}^n 2^{k-1}s_{k,n-k}(C,T))^2 + \sum_{b=1}^n 2^{b-1}s_{b,n-b}(CT, T^2)$ equals
\[\sum_{\substack{a,b,c\in\NN\\a+b+c=n\\a+b\geq 1}} 2^{2a+2b-2}s_{a,b,c}(C^2, CT, T^2) = \sum_{\substack{a,b\in\NN\\1\leq a+b\leq n}} 4^{a+b-1}s_{a,b,c}(C^2, CT, T^2)\]
as desired. So the map $\norm\quadalg{T+2C}{N+CT+C^2}\to\norm\quadalg{T}{N}$ sending $x\mapsto x + \sum_{k=1}^n 2^{k-1}s_{k,n-k}(C,T)$ preserves traces and norms, so it is a norm-preserving homomorphism.
\end{proof}

\begin{example}\label{ex:swap}
 For example, let $\A\to\B$ be a rank-$n$ algebra and let $f:\B^2\to\B^2$ be the ``swap'' map of quadratic algebras $\B$-algebras sending $(b, b')\mapsto (b',b)$.  (In terms of based quadratic algebras, this is the map $\quadalg[\B]{1}{0}\to\quadalg[\B]{1}{0}$ sending $x\mapsto -x+1$.)  If $n$ is odd, then $\norm_{\B/\A}(f):\A^2\to\A^2$ is also the swap map; if $n$ is even it is the identity map.
\end{example}

\begin{proof}
 Applying the norm map to $f\colon \quadalg[\B]{1}{0}\to\quadalg[\B]{1}{0}: x\mapsto -x+1$, we obtain
 \begin{align*}
  \norm_{\B/\A}(f)\colon x&\mapsto s_n(-1)x + \left.\frac{s_n(\lambda - 1) - s_n(-1)}{\lambda}\right|_{\lambda=2}\\
  &= (-1)^n x + \left.\frac{(\lambda-1)^n - (-1)^n}{(\lambda-1) - (-1)}\right|_{\lambda=2}\\
  &= (-1)^n x + \left.(\lambda-1)^{n-1} + (-1)(\lambda-1)^{n-2} + \dots + (-1)^{n-1}\right|_{\lambda=2}\\
  &= (-1)^n x + 1 - 1 + \ldots + (-1)^{n-1}.
 \end{align*}
Now if $n$ is even, this works out to $x\mapsto 1x+0$, the identity map. And if $n$ is odd, this becomes $-x + 1$, the swap map $\A^2\to\A^2$.
\end{proof}

\begin{proposition}\label{prop:functorial}
 The assignment $f\mapsto \norm_{\B/\A}(f)$ in \cref{prop:different-generator} is functorial: if we have a commuting triangle of univerally norm-preserving homomorphisms between quadratic $\B$-algebras
   \[\begin{array}{ccccc}
  & &  \quadalg{T'}{N'} & &\\
  & \nearrow & & \searrow & \\
  \quadalg{T''}{N''} & & \longrightarrow & & \quadalg{T}{N}
 \end{array}\]
 then the resulting triangle of norm-preserving $\A$-algebra homomorphisms
  \[\begin{array}{ccccc}
  & & \norm_{\B/\A} \quadalg{T'}{N'} & &\\
  & \nearrow & & \searrow & \\
  \norm_{\B/\A}\quadalg{T''}{N''} & & \longrightarrow & & \norm_{\B/\A}\quadalg{T}{N}
 \end{array}\]
 also commutes.
\end{proposition}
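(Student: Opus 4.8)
The plan is to reduce the general composition statement to the two elementary cases $C = 0$ and $U = 1$ that were handled in the proof of \cref{prop:different-generator}, and in each of those cases to check functoriality by a direct computation on the explicit formulas for $\norm_{\B/\A}(f)$. Recall that any universally norm-preserving homomorphism $\quadalg{T'}{N'}\to\quadalg{T}{N}$ sending $x\mapsto Ux+C$ factors canonically as $\quadalg{T'}{N'}\to\quadalg{UT}{U^2N}$ (the case $U=1$, sending $x\mapsto x+C$) followed by $\quadalg{UT}{U^2N}\to\quadalg{T}{N}$ (the case $C=0$, sending $x\mapsto Ux$). So given the commuting triangle with maps $x\mapsto U_1 x + C_1$, $x\mapsto U_2 x + C_2$, $x\mapsto U_3 x + C_3$ (where the third is the composite of the first two, so $U_3 = U_2 U_1$ and $C_3 = U_2 C_1 + C_2$), it suffices to verify that $\norm_{\B/\A}$ respects composition of maps of the two elementary types, since every map is such a composite and composition of composites is again a composite.

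First I would check that $\norm_{\B/\A}$ respects composition of two scaling maps ($C=0$): the composite of $x\mapsto U'x$ and $x\mapsto Ux$ is $x\mapsto UU'x$, and on norms we computed $\norm_{\B/\A}$ sends a scaling $x\mapsto Vx$ to $x\mapsto s_n(V)x$; since $s_n(UU') = s_n(U)s_n(U')$ by \cref{lem:multiplicative-sn}, this case is immediate. Next I would check composition of two translations ($U=1$): the composite of $x\mapsto x+C'$ and $x\mapsto x+C$ is $x\mapsto x + (C+C')$, and $\norm_{\B/\A}$ sends $x\mapsto x+C$ to $x\mapsto x + \left.\tfrac{s_n(\lambda C + T')-s_n(T')}{\lambda}\right|_{\lambda=2}$ where $T'$ is the trace of the source algebra; here the obstacle is that the intermediate trace changes ($T\mapsto T + 2C'$ after the first translation), so one must verify the additivity identity
\[\left.\frac{s_n(\lambda C + T+2C') - s_n(T+2C')}{\lambda}\right|_{\lambda=2} + \left.\frac{s_n(\lambda C' + T) - s_n(T)}{\lambda}\right|_{\lambda=2} = \left.\frac{s_n(\lambda(C+C') + T) - s_n(T)}{\lambda}\right|_{\lambda=2},\]
which I expect to follow by expanding both sides via polarized forms $s_{j,n-j}$ as in the proof of \cref{prop:different-generator} and matching coefficients, or more slickly by recognizing the left side as a telescoping sum once $2C'$ is absorbed. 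Finally I would check the "mixed" composites: a translation followed by a scaling, and a scaling followed by a translation, using the canonical factorization to rewrite either mixed composite back into the standard translation-then-scaling form and applying the two cases already established.

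The main obstacle will be the translation-composition identity above: unlike the scaling case, which is just multiplicativity of $s_n$, this one genuinely uses the structure of the polarized forms and the shift of the trace entry. I would handle it by noting that $\left.\tfrac{s_n(\lambda C + T)-s_n(T)}{\lambda}\right|_{\lambda = 2} = \sum_{k=1}^n 2^{k-1} s_{k,n-k}(C,T)$, so the claim becomes a polynomial identity in the polarized forms of $C$, $C'$, and $T$; expanding $s_{k,n-k}(C, T+2C')$ by \cref{lem:polarized-identity} (combination and homogeneity) in terms of $s_{j,k',n-j-k'}(C, C', T)$ and collecting, the binomial sums that appear should collapse exactly as the $(1-1)^{j+k}$-type sums did in the proof of \cref{prop:different-generator}. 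Once all four composition cases are verified, functoriality for general norm-preserving homomorphisms follows by factoring through the elementary maps, which completes the proof.
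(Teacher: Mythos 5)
Your proposal is correct in substance, but it is organized differently from the paper's proof. The paper does not reduce to elementary cases here: it takes two arbitrary maps $x\mapsto Ux+C$ and $x\mapsto Vx+D$, writes out the composite of their norms directly, and collapses everything in one computation using exactly the two ingredients you identify --- multiplicativity of $s_n$ to absorb the factor $s_n(V)$ into the argument of the first constant term, and the ``replace the $2$ by $\lambda$ and telescope'' trick to merge the two constant terms into $\left.\frac{s_n(\lambda(VC+D)+VUT)-s_n(VUT)}{\lambda}\right|_{\lambda=2}$. Your route instead factors every map as a translation followed by a scaling and checks the four pairwise composition types; the two nontrivial ones (scaling--scaling and translation--translation) isolate precisely the same two identities, and your translation--translation identity is correct and does follow by the telescoping argument you sketch (the left-hand numerators cancel in pairs once $2C'$ is rewritten as $\lambda C'$). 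What the paper's direct computation buys is brevity: the general case is no harder than your special cases, so nothing is gained by the reduction. What your modularity costs is the reassembly step, which you should not wave at: to conclude $\norm(f_1\circ f_2)=\norm(f_1)\circ\norm(f_2)$ for general $f_i=h_i\circ g_i$ you must commute the inner translation past the inner scaling, and this changes the translation constant ($g\circ h = h\circ\tilde g$ with $\tilde g$ translating by $U$ times the original constant), so the mixed cases must be stated and used in both orders. Two further small cautions: your parenthetical ``where $T'$ is the trace of the source algebra'' is a slip --- the formula in \cref{prop:different-generator} uses $UT$ with $T$ the trace of the \emph{target} generator, and for a translation the source and target traces differ by $2C$, so getting this wrong would break your identity (your displayed identity does use the correct, target-trace convention). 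Finally, since \cref{prop:different-generator}'s own proof defers to functoriality, make sure your argument uses only the formula \cref{eq:hom-image} and not the norm-preserving property of $\norm_{\B/\A}(f)$; as written it does, so there is no circularity.
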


\begin{proof}
Let us suppose that the homomorphism $\quadalg{T'}{N'} \to \quadalg{T}{N}$ sends $x\mapsto Ux+C$, so that $T'=UT+2C$. Suppose also that the homomorphism $\quadalg{T''}{N''}\to \quadalg{T'}{N'}$ sends $x\mapsto Vx+D$, so that the composite homomorphism $\quadalg{T''}{N''}\to\quadalg{T}{N}$ sends $x\mapsto V(Ux+C)+D = VUx + (VC+D)$. Then the composite homomorphism of norms
\[\norm\quadalg{T''}{N''} \to \norm\quadalg{T'}{N'} \to \norm\quadalg{T}{N}\]
sends
\begin{align*}
 x&\mapsto s_n(V)x +\left.\frac{s_n(\lambda D + VT') - s_n(VT')}{\lambda}\right|_{\lambda = 2}\\
 &= s_n(V)s_n(U)x + s_n(V)\left.\frac{s_n(\lambda C + UT) - s_n(UT)}{\lambda}\right|_{\lambda = 2}+\left.\frac{s_n(\lambda D + VT') - s_n(VT')}{\lambda}\right|_{\lambda = 2}\\
 &= s_n(VU)x + \left.\frac{s_n(\lambda VC + VUT) - s_n(VUT)}{\lambda}\right|_{\lambda = 2}+\left.\frac{s_n(\lambda D + VUT+2VC) - s_n(VUT + 2 VC)}{\lambda}\right|_{\lambda = 2}\\
\end{align*}
We can combine the two constant terms into one larger sum (replacing a couple of extra 2's by $\lambda$ since they will be set equal to $2$ again anyway) as follows:
\begin{align*}
 &\left.\frac{s_n(\lambda VC + VUT) - s_n(VUT)}{\lambda}\right|_{\lambda = 2}+\left.\frac{s_n(\lambda D + VUT+2VC) - s_n(VUT + 2 VC)}{\lambda}\right|_{\lambda = 2}\\
 &= \left.\frac{s_n(\lambda VC + VUT) - s_n(VUT) + s_n(\lambda D + VUT + \lambda VC) - s_n(VUT + \lambda VC)}{\lambda}\right|_{\lambda=2}\\
 &= \left.\frac{ s_n(\lambda D + VUT + \lambda VC) - s_n(VUT)}{\lambda}\right|_{\lambda=2}\\
\end{align*}
So in all, the composite map of norms $\norm\quadalg{T''}{N''}\to \norm\quadalg{T'}{N'}\to \norm\quadalg{T}{N}$ sends
\[x\mapsto s_n(VU) x + \left.\frac{ s_n(\lambda (VC+D) + VUT) - s_n(VUT)}{\lambda}\right|_{\lambda=2}\]
which is exactly the norm of the composite map $\quadalg{T''}{N''}\to\quadalg{T'}{N'}\to \quadalg{T}{N}$ sending $x\mapsto VUx + (VC+D)$, as desired.
\end{proof}

To show that the norm operation on general quadratic algebras is transitive, we will also need transitivity for the norm operation on homomorphisms:

\begin{proposition} \label{thm:transitive-morphism}The norm map on homomorphisms is also transitive: if $\A\to\B$ is a rank-$n$ algebra, and $\B\to\C$ is a rank-$m$ algebra, and $f:\quadalg{T'}{N'}\to\quadalg{T}{N}$ is a norm-preserving homomorphism of based quadratic $\C$-algebras, then
\[\norm_{\B/\A}(\norm_{\C/\B}(f)) = \norm_{\C/\A}(f).\]
\end{proposition}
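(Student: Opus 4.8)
The plan is to write out both $\norm_{\B/\A}(\norm_{\C/\B}(f))$ and $\norm_{\C/\A}(f)$ using the explicit formula of \cref{prop:different-generator} and to check that they send the canonical generator $x$ to the same element, reusing the variable-substitution trick from the proof of \cref{thm:transitive-based}. Write $f\colon x\mapsto Ux + C$ with $U,C\in\C$. By \cref{prop:different-generator} applied to the rank-$m$ algebra $\B\to\C$, the map $\norm_{\C/\B}(f)$ is the norm-preserving homomorphism of based quadratic $\B$-algebras $\norm_{\C/\B}\quadalg{T'}{N'}\to\norm_{\C/\B}\quadalg{T}{N}$ sending $x\mapsto s_m(U)x + C'$, where
\[C' = \left.\frac{s_m(\nu C + UT) - s_m(UT)}{\nu}\right|_{\nu=2}\in\B,\]
and the target $\norm_{\C/\B}\quadalg{T}{N}$ has canonical generator of trace $s_m(T)$. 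Applying \cref{prop:different-generator} again, this time to the rank-$n$ algebra $\A\to\B$, the composite $\norm_{\B/\A}(\norm_{\C/\B}(f))$ sends
\[x\mapsto s_n(s_m(U))\,x + \left.\frac{s_n\bigl(\lambda C' + s_m(U)s_m(T)\bigr) - s_n\bigl(s_m(U)s_m(T)\bigr)}{\lambda}\right|_{\lambda=2},\]
whereas \cref{prop:different-generator} applied to the rank-$mn$ algebra $\A\to\C$ gives that $\norm_{\C/\A}(f)$ sends $x\mapsto s_{mn}(U)\,x + \left.\frac{s_{mn}(\lambda C + UT) - s_{mn}(UT)}{\lambda}\right|_{\lambda=2}$.

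Comparing leading coefficients is then immediate from \cref{lem:transitive-sn}, which gives $s_n(s_m(U)) = s_{mn}(U)$. For the constant terms, I would first use \cref{lem:multiplicative-sn} to rewrite $s_m(U)s_m(T) = s_m(UT)$, and then, exactly as in the proof of \cref{thm:transitive-based}, observe that since $\lambda$ is ultimately set to $2$ the auxiliary variable $\nu$ hidden inside $C'$ may be set equal to $\lambda$ before evaluating. Under this substitution $\lambda C' + s_m(UT)$ collapses to $s_m(\lambda C + UT)$, so the constant term of $\norm_{\B/\A}(\norm_{\C/\B}(f))$ becomes
\[\left.\frac{s_n\bigl(s_m(\lambda C + UT)\bigr) - s_n\bigl(s_m(UT)\bigr)}{\lambda}\right|_{\lambda=2} = \left.\frac{s_{mn}(\lambda C + UT) - s_{mn}(UT)}{\lambda}\right|_{\lambda=2}\]
by one last application of \cref{lem:transitive-sn}, matching the constant term of $\norm_{\C/\A}(f)$. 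Since the source and target based quadratic $\A$-algebras of the two homomorphisms literally coincide (the trace and norm of the relevant canonical generators are computed to agree in the proof of \cref{thm:transitive-based}), agreement on the generator $x$ then forces $\norm_{\B/\A}(\norm_{\C/\B}(f)) = \norm_{\C/\A}(f)$.

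The one delicate point is the legitimacy of replacing $\nu$ by $\lambda$: this must be phrased as the statement that the relevant rational expression in fact defines an element of $\A[\nu,\lambda]$ — it does, because $s_m(\nu C + UT) - s_m(UT)$ is divisible by $\nu$ in $\B[\nu]$ and the resulting outer difference is divisible by $\lambda$ — and that the image of this element under $\nu,\lambda\mapsto 2$ equals its image under $\nu\mapsto\lambda$ followed by $\lambda\mapsto 2$. As this is the identical maneuver already performed in \cref{thm:transitive-based}, I expect it to be pure bookkeeping rather than a genuine obstacle; the substantive content of the proof is simply the two appeals to transitivity of $s_n$ in \cref{lem:transitive-sn}.
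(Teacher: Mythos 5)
Your proposal is correct and follows essentially the same route as the paper's own proof: apply the formula of \cref{prop:different-generator} twice, identify the leading coefficients via \cref{lem:transitive-sn}, and handle the constant terms by setting the inner auxiliary variable equal to the outer one before evaluating at $2$, then collapsing via multiplicativity and transitivity of the norm. The ``delicate point'' you flag about the variable substitution is exactly the maneuver the paper performs (and justifies in the same way) in \cref{thm:transitive-based}, so nothing further is needed.
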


\begin{proof}
 Let us suppose that $f:\quadalg{T'}{N'}\to\quadalg{T}{N}$ sends $x\mapsto Ux+C$. Then on the one hand, we have
 \[ \norm_{\C/\B}(f) \colon x\mapsto s_m(U)x +\left. \frac{s_m(\lambda C + UT) -s_m(UT)}{\lambda}\right|_{\lambda=2}\]
  so, since $x\in\norm_{\C/\B}\quadalg{T}{N}$ has trace $s_m(T)$, we have that $\norm_{\B/\A}( \norm_{\C/\B}(f))$ sends
  \begin{align*}
   x &\mapsto s_n(s_m(U))x +\left.\frac{s_n\left(\mu\left. \frac{s_m(\lambda C + UT) -s_m(UT)}{\lambda}\right|_{\lambda=2} + s_m(U)s_m(T)\right) - s_n(s_m(U)s_m(T))}{\mu}\right|_{\mu=2}\\
 &= s_{mn}(U) x +\left.\frac{s_n\left(\mu\left. \frac{s_m(\lambda C + UT) -s_m(UT)}{\lambda}\right|_{\lambda=\mu} + s_m(UT)\right) - s_n(s_m(UT))}{\mu}\right|_{\mu=2}\\
&= s_{mn}(U) x  +\left.\frac{s_n\left(\mu\frac{s_m(\mu C + UT) -s_m(UT)}{\mu} + s_m(UT)\right) - s_{mn}(UT)}{\mu}\right|_{\mu=2}\\
 &= s_{mn}(U)x + \left.\frac{s_n\bigl(s_m(\mu C+ UT)- s_m(UT)+s_m(UT)\bigr) - s_{mn}(UT)}{\mu}\right|_{\mu=2}\\
 &= s_{mn}(U)x + \left.\frac{s_{mn}(\mu C+ UT) - s_{mn}(UT)}{\mu}\right|_{\mu=2},
 \end{align*}
which is exactly where $\norm_{\C/\A}(f)$ sends $x$.
\end{proof}

\section{Norms of Quadratic Algebras}

We now have all the results we need in order to define the norm of a general quadratic algebra and prove that it is well-defined:

\begin{definition}\label{def:norm}
 Given a rank-$n$ $\A$-algebra $\B$, we define the norm of a general quadratic $\B$-algebra $\D$ as follows:
 \begin{enumerate}
  \item Choose elements $a_1,\dots, a_k\in \A$, together generating the unit ideal, such that each $\D_{a_i}$ is free as a $\B_{a_i}$-module.
  \item Choose generators for each $\D_{a_i}$ to obtain isomorphisms $\D_{a_i} \cong \quadalg[\B_{a_i}]{T_i}{N_i}$.
  \item Over overlaps $\A_{a_ia_j}$, use \cref{prop:different-generator} to convert the isomorphisms $\quadalg[\B_{a_ia_j}]{T_i}{N_i} \cong \D_{a_ia_j}\cong \quadalg[\B_{a_ia_j}]{T_j}{N_j}$ into isomorphisms \[\norm_{\B_{a_ia_j}/\A_{a_ia_j}}\quadalg[\B_{a_ia_j}]{T_i}{N_i} \cong \norm_{\B_{a_ia_j}/\A_{a_ia_j}}\quadalg[\B_{a_ia_j}]{T_j}{N_j}.\]
  \item Use these isomorphisms to glue the norms of the based quadratic $\A_{a_i}$-algebras $\norm_{\B_{a_i}/\A_{a_i}} \quadalg{T_i}{N_i}$ into a single quadratic $\A$-algebra, called $\norm_{\B/\A}(\D)$.
 \end{enumerate}
\end{definition}

\begin{lemma}
 The construction of $\norm_{\B/\A}(\D)$ in \cref{def:norm} is well-defined.
\end{lemma}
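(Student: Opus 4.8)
The plan is to read \cref{def:norm} as a Zariski descent construction and to verify the two things descent requires: (a) the transition isomorphisms used for gluing satisfy the cocycle condition, so that an $\A$-algebra genuinely results; and (b) the result is independent of the chosen covering $\{a_i\}$ and of the chosen generators, up to canonical isomorphism. Essentially all of the substance has already been isolated: \cref{prop:different-generator} turns an isomorphism of based quadratic algebras into an isomorphism of their norms, \cref{prop:functorial} says this respects composition, and \cref{thm:basechange-based} says the based norm commutes with localizing $\A$. Before starting I would record two small supplementary facts. First, a rank-$2$ $\B$-algebra $\D$ that is free as a $\B$-module is monogenic, so Step (2) can always be performed: if $\{e_1,e_2\}$ is a module basis and $1 = c_1e_1 + c_2e_2$, then $(c_1,c_2)$ is unimodular (it cannot lie in a maximal ideal $\mathfrak m$, else $\bar1 = 0$ in the nonzero ring $\D/\mathfrak m\D\cong(\B/\mathfrak m)^2$), so $1$ extends to a $\B$-basis $\{1,x\}$ and $\D\cong\B[y]/(y^2-Ty+N)$. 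Second, $\norm_{\B/\A}(\id) = \id$: this is immediate from \cref{eq:hom-image} with $U=1$, $C=0$, since $s_{k,n-k}(C,UT)$ is homogeneous of degree $k\geq 1$ in $C$; together with \cref{prop:functorial} this makes $\norm_{\B/\A}$ a functor on the groupoid of based quadratic $\B$-algebras, so it also sends inverses to inverses.

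For (a): on an overlap $\Spec\A_{a_ia_j}$ the two chosen generators of $\D$ induce two based structures $\quadalg[\B_{a_ia_j}]{T_i}{N_i}$ and $\quadalg[\B_{a_ia_j}]{T_j}{N_j}$ on $\D_{a_ia_j}$, and the identity of $\D_{a_ia_j}$ becomes a $\B_{a_ia_j}$-algebra isomorphism $\phi_{ij}$ between them; since $\phi_{ij}$ is obtained by localizing the fixed isomorphisms $\D_{a_i}\cong\quadalg{T_i}{N_i}$, on a triple overlap one has $\phi_{jk}\circ\phi_{ij}=\phi_{ik}$ and $\phi_{ii}=\id$ (all come from $\id_\D$). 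By \cref{thm:basechange-based} the restriction of $\norm_{\B_{a_i}/\A_{a_i}}\quadalg{T_i}{N_i}$ to $\A_{a_ia_j}$ is $\norm_{\B_{a_ia_j}/\A_{a_ia_j}}\quadalg{T_i}{N_i}$, so \cref{prop:different-generator} applies to $\phi_{ij}$ and produces an isomorphism of these restrictions, and by \cref{prop:functorial} together with $\norm(\id)=\id$ these norm-isomorphisms again compose correctly on triple overlaps and are the identity on the diagonal. That is the cocycle condition, so the local algebras $\norm_{\B_{a_i}/\A_{a_i}}\quadalg{T_i}{N_i}$ glue to an $\A$-algebra $\norm_{\B/\A}(\D)$, which is quadratic since it is locally isomorphic to a free rank-$2$ algebra.

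For (b): given two admissible systems of choices with glued results $\M$ and $\M'$, I would pass to the common refinement $\{a_ib_j\}$ (still generating the unit ideal and still making $\D$ locally free). Over $\Spec\A_{a_ib_j}$ both systems put a based structure on $\D_{a_ib_j}$, and $\id_{\D_{a_ib_j}}$ gives an isomorphism between them; applying the norm (legitimate again by \cref{thm:basechange-based} and \cref{prop:different-generator}) yields isomorphisms between the $(a_ib_j)$-pieces of $\M$ and of $\M'$. On overlaps the relevant square of based-algebra isomorphisms over $\D$ commutes, hence by \cref{prop:functorial} so does the square of norms, so these isomorphisms glue to $\M\cong\M'$; and confronting a third system, the triangle of such isomorphisms commutes, again by \cref{prop:functorial}, so the identifications are coherent. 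Hence $\norm_{\B/\A}(\D)$ is well-defined up to canonical isomorphism.

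I do not expect a genuinely hard step here; the obstacle is bookkeeping. The two points to be most careful about are that every transition isomorphism in sight is literally the image of $\id_\D$ under a localization, so that \cref{prop:functorial} can legitimately be invoked to get the cocycle identities, and that ``compute the based norm over $\A_{a_i}$, then restrict to $\A_{a_ia_j}$'' really agrees with ``compute it over $\A_{a_ia_j}$'', which is exactly \cref{thm:basechange-based}. Once those compatibilities are lined up and the two preliminary facts above are in hand, the statement follows by the standard Zariski gluing formalism with no further computation.
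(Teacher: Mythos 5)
Your proposal is correct and follows essentially the same route as the paper: the cocycle condition on triple overlaps is deduced from \cref{prop:functorial} applied to transition isomorphisms that all come from $\id_\D$, and independence of choices is handled by passing to a common (refined, resp.\ combined) cover. The extra details you supply --- that a free rank-$2$ algebra is monogenic, that $\norm_{\B/\A}(\id)=\id$, and that \cref{thm:basechange-based} reconciles ``norm then localize'' with ``localize then norm'' --- are correct and only make explicit what the paper leaves implicit.
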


\begin{proof}
 There are two things to check: that the norms of the based quadratic algebras $\norm_{\B_{a_i}/\A_{a_i}} \quadalg{T_i}{N_i}$ do, in fact, glue together to yield a quadratic $\A$-algebra, and that the resulting algebra is independent (up to isomorphism) of the choices of the $a_i$ and generators for $\D_{a_i}$.
 
 First we check that the quadratic algebras do glue; this amounts to checking that over triple overlaps $\A_{a_ia_ja_k}$, the following triangle of isomorphisms commutes:
 \[\begin{array}{ccccc}
  & & \norm \quadalg{T_j}{N_j} & &\\
  & \nearrow & & \searrow & \\
  \norm\quadalg{T_i}{N_i} & & \longrightarrow & & \norm\quadalg{T_k}{N_k}
 \end{array}\]
 But this is true by \cref{prop:functorial} because the corresponding triangle of quadratic $\B_{a_ia_ja_k}$-algebra homomorphisms commutes, since the isomorphisms between them come from chosen isomorphisms with $\D_{a_ia_ja_k}$:
 \[\begin{array}{ccccc}
  & & \quadalg{T_j}{N_j} & &\\
  & \nearrow & & \searrow & \\
  \quadalg{T_i}{N_i} & & \longrightarrow & & \quadalg{T_k}{N_k}
 \end{array}\]
 
 Second, we check independence of the choices of the $a_i$ and generators for $\D_{a_i}$: suppose that we had a second collection of elements $a_1', a_2', \ldots$ and isomorphisms $\D_{a_i'}\cong \quadalg{T_i'}{N_i'}$. Then we could combine the two collections into one and construct a common gluing of all the $\norm\quadalg{T_i}{N_i}$ and $\norm\quadalg{T_i'}{N_i'}$. In particular, the result will be isomorphic to the gluings obtained from each family separately, so the norm is independent of choice of generator. 
\end{proof}

We can now boost our results on the norm operation for based quadratic algebras to this new, more general setting:

\begin{theorem}\label{thm:base-change}
 The norm operation on general quadratic algebras commutes with base change: Let $\A\to\B$ be a rank-$n$ algebra, and let $\D$ be a quadratic $\B$-algebra. If $\C$ is any $\A$-algebra, then $\C\to\C\otimes_\A \B$ is another rank-$n$ algebra, and $\C\otimes_\A \D$ is a quadratic $\C\otimes_\A \B$-algebra with
 \[\norm_{(\C\otimes_\A \B)/\C} (\C\otimes_\A \D) = \C \otimes_\A \norm_{\B/\A}(\D).\]
\end{theorem}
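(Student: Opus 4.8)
The plan is to reduce to the based case, \cref{thm:basechange-based}, by carrying the local data used in \cref{def:norm} through the base change $\A\to\C$. Fix elements $a_1,\dots,a_k\in\A$ generating the unit ideal with each $\D_{a_i}$ free over $\B_{a_i}$, and fix generators giving isomorphisms $\D_{a_i}\cong\quadalg[\B_{a_i}]{T_i}{N_i}$, exactly as in the construction of $\norm_{\B/\A}(\D)$. First I would check that the images of the $a_i$ in $\C$ still generate the unit ideal; that $(\C\otimes_\A\B)_{a_i}\cong\C_{a_i}\otimes_{\A_{a_i}}\B_{a_i}$ is a rank-$n$ $\C_{a_i}$-algebra; and that $(\C\otimes_\A\D)_{a_i}\cong\C_{a_i}\otimes_{\A_{a_i}}\D_{a_i}\cong\quadalg[(\C\otimes_\A\B)_{a_i}]{T_i}{N_i}$, the image of the chosen generator still serving as a generator after base change. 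Thus the same covering and the base-changed generators may legitimately be used to compute $\norm_{(\C\otimes_\A\B)/\C}(\C\otimes_\A\D)$ according to \cref{def:norm}.

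Next, on each patch, \cref{thm:basechange-based} supplies a canonical identification
\[\norm_{(\C\otimes_\A\B)_{a_i}/\C_{a_i}}\quadalg{T_i}{N_i} \;=\; \C_{a_i}\otimes_{\A_{a_i}}\norm_{\B_{a_i}/\A_{a_i}}\quadalg{T_i}{N_i},\]
since the trace and norm entries of the norm algebra are built from $s_n$ and its polarized forms, which commute with base change. On overlaps $\A_{a_ia_j}$, the gluing isomorphism used in \cref{def:norm} is the image under $\norm_{\B_{a_ia_j}/\A_{a_ia_j}}$ (in the sense of \cref{prop:different-generator}) of the transition isomorphism $\quadalg[\B_{a_ia_j}]{T_i}{N_i}\cong\D_{a_ia_j}\cong\quadalg[\B_{a_ia_j}]{T_j}{N_j}$; because the formula of \cref{prop:different-generator} is again expressed purely through $s_n$ and polarized forms, it too is compatible with base change. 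Hence the transition isomorphisms produced for $\norm_{(\C\otimes_\A\B)/\C}(\C\otimes_\A\D)$ are precisely the base changes along $\A\to\C$ of those produced for $\norm_{\B/\A}(\D)$, and the cocycle condition is inherited.

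Finally I would invoke uniqueness of gluing. The $\C$-algebra $\C\otimes_\A\norm_{\B/\A}(\D)$ has localization at the image of $a_i$ equal to $\C_{a_i}\otimes_{\A_{a_i}}\norm_{\B_{a_i}/\A_{a_i}}\quadalg{T_i}{N_i}$, with overlap isomorphisms obtained by base change from those of $\norm_{\B/\A}(\D)$ --- which is exactly the descent datum out of which $\norm_{(\C\otimes_\A\B)/\C}(\C\otimes_\A\D)$ is assembled. Since a quasicoherent sheaf (here, quadratic algebra) is recovered up to canonical isomorphism from its restrictions to a cover together with their patching isomorphisms, the two quadratic $\C$-algebras coincide.

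I expect the only genuine work to be bookkeeping: confirming that every isomorphism in sight --- the based-case base-change isomorphism of \cref{thm:basechange-based}, the localization isomorphisms $(\C\otimes_\A M)_{a_i}\cong\C_{a_i}\otimes_{\A_{a_i}}M_{a_i}$, and the morphism-level base-change compatibility of \cref{prop:different-generator} --- is natural enough that the two descent data literally agree, rather than merely being abstractly isomorphic, so that the patching data really do match. Once that naturality is pinned down (it all comes from $s_n$ being a polynomial law), the conclusion is immediate, and the proof can be compressed to little more than ``apply \cref{thm:basechange-based} patchwise and glue.''
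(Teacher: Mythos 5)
Your proposal is correct and follows exactly the paper's own argument: apply \cref{thm:basechange-based} on each patch of a cover trivializing $\D$, and observe that the gluing isomorphisms from \cref{prop:different-generator} also commute with base change because they are built from $s_n$ and its polarized forms, which are polynomial laws. You simply spell out the descent bookkeeping that the paper leaves implicit.
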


\begin{proof}
 We already know by \cref{thm:basechange-based} that this holds locally, so we need only check that taking the norms of the gluing isomorphisms also commutes with base change. But the gluing isomorphisms are also built out of polarized forms of the polynomial law $s_n$, so this is again automatic.
\end{proof}

\begin{theorem}\label{thm:transitive}
 The norm operation is transitive: If $\A\to \B$ is a rank-$n$ algebra and $\B\to \C$ is a rank-$m$ algebra, and if $\D$ is a quadratic $\C$-algebra, then $\norm_{\C/\A}(\D) \cong \norm_{\B/\A}(\norm_{\C/\B}(\D))$.
\end{theorem}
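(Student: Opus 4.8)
The plan is to reduce to the based case already handled in \cref{thm:transitive-based} by working locally on $\Spec\A$, and then to verify that the resulting local identifications are compatible with the gluing data — which is precisely what \cref{thm:transitive-morphism} is for.

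First I would fix a common affine cover. Since $\A\to\B$, and hence $\A\to\C$, is finite, by \cite[Prop.\ 6.1.12 on p.\ 113]{EGAII} (exactly as in the proof of \cref{lem:transitive-sn}) I can choose $a_1,\dots,a_k\in\A$ generating the unit ideal such that for each $i$ the module $\D_{a_i}$ is free over $\C_{a_i}$ (and, if convenient, also $\C_{a_i}$ free over $\B_{a_i}$ and $\B_{a_i}$ free over $\A_{a_i}$); fix generators to get isomorphisms $\D_{a_i}\cong\quadalg[\C_{a_i}]{T_i}{N_i}$, and let $f_{ij}\colon\quadalg[\C_{a_ia_j}]{T_i}{N_i}\to\quadalg[\C_{a_ia_j}]{T_j}{N_j}$ be the transition isomorphisms. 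By \cref{def:norm}, $\norm_{\C/\A}(\D)$ is glued from the $\norm_{\C_{a_i}/\A_{a_i}}\quadalg{T_i}{N_i}$ along the $\norm_{\C_{a_ia_j}/\A_{a_ia_j}}(f_{ij})$. On the other side, \cref{thm:base-change} (base change along $\B\to\B_{a_i}$) identifies the restriction of $\norm_{\C/\B}(\D)$ with $\norm_{\C_{a_i}/\B_{a_i}}\quadalg{T_i}{N_i}$, with transition maps $\norm_{\C_{a_ia_j}/\B_{a_ia_j}}(f_{ij})$; applying \cref{thm:base-change} again (along $\A\to\A_{a_i}$) identifies the restriction of $\norm_{\B/\A}(\norm_{\C/\B}(\D))$ with $\norm_{\B_{a_i}/\A_{a_i}}\bigl(\norm_{\C_{a_i}/\B_{a_i}}\quadalg{T_i}{N_i}\bigr)$, glued along $\norm_{\B_{a_ia_j}/\A_{a_ia_j}}\bigl(\norm_{\C_{a_ia_j}/\B_{a_ia_j}}(f_{ij})\bigr)$. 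Now \cref{thm:transitive-based} says the two based quadratic $\A_{a_i}$-algebras $\norm_{\C_{a_i}/\A_{a_i}}\quadalg{T_i}{N_i}$ and $\norm_{\B_{a_i}/\A_{a_i}}\bigl(\norm_{\C_{a_i}/\B_{a_i}}\quadalg{T_i}{N_i}\bigr)$ are in fact equal (its proof checks that the trace and norm of the canonical generator agree on the nose), so the two sides literally coincide over each piece of the cover.

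It remains to see that these piecewise equalities glue, i.e.\ that over each overlap $\Spec\A_{a_ia_j}$ the transition isomorphism $\norm_{\C_{a_ia_j}/\A_{a_ia_j}}(f_{ij})$ for the left-hand side agrees with the transition isomorphism $\norm_{\B_{a_ia_j}/\A_{a_ia_j}}\bigl(\norm_{\C_{a_ia_j}/\B_{a_ia_j}}(f_{ij})\bigr)$ for the right-hand side. But that is exactly \cref{thm:transitive-morphism} applied to the norm-preserving homomorphism $f_{ij}$ relative to the rank-$n$ algebra $\A_{a_ia_j}\to\B_{a_ia_j}$ and the rank-$m$ algebra $\B_{a_ia_j}\to\C_{a_ia_j}$. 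Hence the local equalities assemble into a global isomorphism $\norm_{\C/\A}(\D)\cong\norm_{\B/\A}(\norm_{\C/\B}(\D))$. I expect the only real friction to be the bookkeeping in the first two steps — pinning down a single principal cover of $\Spec\A$ that simultaneously trivializes $\D$ over $\C$ (via the EGA finiteness result) and checking carefully that restriction of the ``general'' norm functor to a principal open recovers the ``based'' norm of the restriction (via \cref{thm:base-change}); once that is set up, \cref{thm:transitive-based} and \cref{thm:transitive-morphism} do all the work.
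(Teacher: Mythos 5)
Your proposal is correct and follows exactly the paper's own route: reduce to the based case via \cref{thm:transitive-based} over a common principal cover of $\Spec\A$, and check that the gluing isomorphisms agree via \cref{thm:transitive-morphism}. You simply spell out in more detail the cover-selection and base-change bookkeeping that the paper leaves implicit.
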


\begin{proof}
 We have already shown in \cref{thm:transitive-based} that this holds for norms of based quadratic algebras; it remains to show that the isomorphisms used to glue together the norms of based localizations also agree, but this holds by \cref{thm:transitive-morphism}.
\end{proof}

We can now use transitivity of the norm operation to show that $\norm_{\B/\A}$ is a homomorphism with respect to the monoid operation $\ast$ on quadratic algebras:

\begin{theorem}\label{thm:homomorphism}
 The norm operation is a homomorphism with respect to the monoid operation $\ast$ on isomorphism classes of quadratic algebras: If $\A\to \B$ is a rank-$n$ algebra and $\D$ and $\D'$ are quadratic $\B$-algebras, then $\norm_{\B/\A}(\D\ast \D') \cong \norm_{\B/\A}(\D) \ast \norm_{\B/\A} (\D')$.
\end{theorem}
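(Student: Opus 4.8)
The plan is to reduce the statement to the special case already worked out in \cref{ex:norm-product}, namely that the norm of a product of two based quadratic algebras over $\A\times\A$ is their $\ast$-composite over $\A$, and then combine this with transitivity of the norm (\cref{thm:transitive}) and with base change (\cref{thm:base-change}). The key observation is that the monoid operation $\ast$ on quadratic $\B$-algebras can itself be realized as a norm: given quadratic $\B$-algebras $\D$ and $\D'$, form the quadratic $(\B\times\B)$-algebra $\D\times\D'$, and then $\norm_{(\B\times\B)/\B}(\D\times\D')\cong \D\ast\D'$. Locally this is exactly the computation of \cref{ex:norm-product} (with $\B$ in place of $\A$); to get it globally one notes that both sides are quadratic $\B$-algebras defined by gluing, and the gluing data match by \cref{prop:functorial}, since a pair of isomorphisms of based presentations of $\D$ and $\D'$ induces the product isomorphism of based presentations of $\D\times\D'$, whose norm under $\norm_{(\B\times\B)/\B}$ is the $\ast$ of the two norms by the morphism version implicit in \cref{prop:different-generator}. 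It may be cleanest to record this as a lemma: for any ring $\R$ and quadratic $\R$-algebras $\D_1,\D_2$, one has $\norm_{(\R\times\R)/\R}(\D_1\times\D_2)\cong\D_1\ast\D_2$ naturally in isomorphisms.

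Granting that lemma, here is the argument. Start from $\norm_{\B/\A}(\D\ast\D')$. Rewrite $\D\ast\D'$ as $\norm_{(\B\times\B)/\B}(\D\times\D')$ using the lemma, so that
\[
\norm_{\B/\A}(\D\ast\D')\cong \norm_{\B/\A}\bigl(\norm_{(\B\times\B)/\B}(\D\times\D')\bigr).
\]
Now apply transitivity (\cref{thm:transitive}) to the tower $\A\to\B\to\B\times\B$, where $\B\to\B\times\B$ has rank $2$ and $\A\to\B$ has rank $n$, so $\A\to\B\times\B$ has rank $2n$:
\[
\norm_{\B/\A}\bigl(\norm_{(\B\times\B)/\B}(\D\times\D')\bigr)\cong \norm_{(\B\times\B)/\A}(\D\times\D').
\]
On the other hand, $\B\times\B = (\A\times\A)\otimes_\A\B$, and under this identification $\D\times\D'$ is the base change along $\A\times\A\to\B\times\B$ of... no: rather, we factor the tower the other way, as $\A\to\A\times\A\to\B\times\B$, where $\A\times\A\to\B\times\B$ is obtained from $\A\to\B$ by base change along $\A\to\A\times\A$, hence has rank $n$, and $\A\to\A\times\A$ has rank $2$. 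Transitivity again gives
\[
\norm_{(\B\times\B)/\A}(\D\times\D')\cong \norm_{(\A\times\A)/\A}\bigl(\norm_{(\B\times\B)/(\A\times\A)}(\D\times\D')\bigr).
\]
By \cref{thm:base-change} applied to $\A\to\B$ and the base change $\A\to\A\times\A$, the inner term $\norm_{(\B\times\B)/(\A\times\A)}(\D\times\D')$ is just $(\A\times\A)\otimes_\A\bigl(\norm_{\B/\A}(\D)\bigr)\times(\A\times\A)\otimes_\A\bigl(\norm_{\B/\A}(\D')\bigr)$ assembled as a quadratic $\A\times\A$-algebra, i.e. it is $\norm_{\B/\A}(\D)\times\norm_{\B/\A}(\D')$. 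Applying the lemma once more (now over $\R=\A$),
\[
\norm_{(\A\times\A)/\A}\bigl(\norm_{\B/\A}(\D)\times\norm_{\B/\A}(\D')\bigr)\cong \norm_{\B/\A}(\D)\ast\norm_{\B/\A}(\D'),
\]
which is the desired conclusion.

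The main obstacle is establishing the lemma that $\norm_{(\R\times\R)/\R}(\D_1\times\D_2)\cong\D_1\ast\D_2$ globally, i.e. for $\D_1,\D_2$ not necessarily based. The based case is \cref{ex:norm-product}, and the naturality-in-isomorphisms statement needed to glue is precisely the content of \cref{prop:different-generator} and \cref{prop:functorial}, so the work is bookkeeping: checking that the chosen local presentations of $\D_1\times\D_2$ can be taken to be products of local presentations of $\D_1$ and $\D_2$ (refine the two open covers to a common one), and that the gluing cocycle of $\norm_{(\R\times\R)/\R}(\D_1\times\D_2)$ matches the gluing cocycle of $\D_1\ast\D_2$ defined via \cref{eq:ast}. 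A secondary subtlety is keeping the two applications of transitivity straight: one must verify that the rank-$2n$ algebra $\A\to\B\times\B$ admits both factorizations used above and that the rank hypotheses of \cref{thm:transitive} are met in each. Once the lemma is in place, the rest is a short diagram chase through \cref{thm:transitive} and \cref{thm:base-change} as displayed above.
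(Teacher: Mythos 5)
Your proposal is correct and follows essentially the same route as the paper: realize $\D\ast\D'$ as $\norm_{(\B\times\B)/\B}(\D\times\D')$, then compute $\norm_{(\B\times\B)/\A}(\D\times\D')$ two ways via the factorizations $\A\to\B\to\B\times\B$ and $\A\to\A\times\A\to\B\times\B$, using transitivity, base change, and \cref{ex:norm-product}. The only difference is that you explicitly flag the need to globalize \cref{ex:norm-product} from based to general quadratic algebras via the gluing machinery of \cref{prop:different-generator,prop:functorial}, a point the paper passes over silently.
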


\begin{proof}
 Consider the tower of algebras $\A \to \B \to \B^2$ and the quadratic $\B^2$-algebra $\D\times \D'$. We can take its norm to produce the quadratic $\A$-algebra
 \begin{align*}
  \norm_{\B^2/\A}(\D\times \D') &\cong \norm_{\B/\A}(\norm_{\B^2/\B}( \D\times \D' ))\\
  &\cong \norm_{\B/\A}( \D\ast \D').
 \end{align*}
 
 On the other hand, we can also factor the algebra map $\A \to \B^2$ as $\A \to \A^2 \to \B^2$, for which transitivity of the norm operation gives
 \begin{align*}
  \norm_{\B^2/\A}(\D\times \D') &\cong \norm_{\A^2/\A} (\norm_{\B^2/\A^2} (\D\times \D'))\\
  &\cong \norm_{\A^2/\A}(\norm_{\B/\A}(\D) \times \norm_{\B/\A}(\D'))\\
  &\cong \norm_{\B/\A}(\D) \ast \norm_{\B/\A}(\D').
 \end{align*}
 (In the second line, we have used base-change invariance of the norm operation to construct the norm over each factor separately, and in the third line, we have invoked \cref{ex:norm-product}) Comparing these two expressions for $\norm_{\B^2/\A}(\D\times \D')$, we obtain $\norm_{\B/\A}(\D\ast \D') \cong \norm_{\B/\A}(\D) \ast \norm_{\B/\A}(\D')$, as desired.
\end{proof}

Our last results are that this notion of norm of quadratic algebras commutes with taking determinant line bundles and discriminant bilinear forms:

\begin{theorem}\label{thm:norm-det}
 The norm operation commutes with taking determinant line bundles: If $\A\to\B$ is a rank-$n$ algebra and $\D$ is a quadratic $\B$-algebra, then $\bigwedge_\A^2\norm_{\B/\A}(\D) = \norm_{\B/\A}(\bigwedge_\B^2 \D)$.
\end{theorem}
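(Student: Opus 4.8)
The plan is to follow the pattern of \cref{def:norm}: construct a canonical isomorphism in the based case, check it does not depend on the chosen generator so that it glues, and note that it commutes with base change. \emph{The based case.} Suppose first that $\D$ admits a global algebra generator $x$, so that $\D\cong\quadalg[\B]{T}{N}$ for some $T,N\in\B$. Then $\bigwedge_\B^2\D$ is free of rank $1$ over $\B$ on $\omega\coloneqq 1\wedge x$, while $\norm_{\B/\A}(\D)$ is, by \cref{def:norm-based}, the based quadratic $\A$-algebra $\quadalg[\A]{s_n(T)}{m}$ with distinguished generator $X$, so $\bigwedge_\A^2\norm_{\B/\A}(\D)$ is free of rank $1$ over $\A$ on $1\wedge X$. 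On the other hand, the $\B$-module isomorphism $\B\xrightarrow{\sim}\bigwedge_\B^2\D$, $b\mapsto b\omega$, induces an $\A$-module isomorphism $\bigwedge_\A^n\B\xrightarrow{\sim}\bigwedge_\A^n(\bigwedge_\B^2\D)$, that is, a generator $\nu$ of $\norm_{\B/\A}(\bigwedge_\B^2\D)=\Hom_\A(\bigwedge_\A^n\B,\bigwedge_\A^n(\bigwedge_\B^2\D))$ as a rank-$1$ $\A$-module. I would then define
\[\phi_{(\D,x)}\colon\bigwedge_\A^2\norm_{\B/\A}(\D)\xrightarrow{\sim}\norm_{\B/\A}(\bigwedge_\B^2\D)\]
to be the $\A$-module isomorphism carrying $1\wedge X$ to $\nu$.

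\emph{Independence of the generator.} The heart of the argument is to check that $\phi_{(\D,x)}$ is unchanged if $x$ is replaced by another generator $x'=Ux+C$ with $U\in\B^\times$. On the left-hand side, \cref{prop:different-generator} says that the distinguished generator of $\norm_{\B/\A}(\D)$ coming from $x'$ equals $s_n(U)X+(\text{constant})$ in terms of the one coming from $x$, so the generator $1\wedge X'$ of $\bigwedge_\A^2\norm_{\B/\A}(\D)$ equals $s_n(U)\,(1\wedge X)$. On the right-hand side, $\omega'=1\wedge x'=U\omega$, so the map $b\mapsto b\omega'$ is multiplication by $U$ on $\B$ followed by $b\mapsto b\omega$; applying $\bigwedge_\A^n$ and using that $\bigwedge_\A^n$ of multiplication by $U$ on $\B$ is multiplication by its determinant $s_n(U)$, we get $\nu'=s_n(U)\,\nu$. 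Since the two preferred generators scale by the same unit $s_n(U)\in\A^\times$, the rule ``generator $\mapsto$ generator'' defines the same isomorphism, so $\phi_{(\D,x)}$ depends only on $\D$.

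\emph{Gluing and base change.} Because $\phi_{(\D,x)}$ is independent of the chosen generator, exactly the argument used to show \cref{def:norm} is well-defined shows that the local isomorphisms agree on overlaps and glue to a canonical isomorphism $\phi_\D\colon\bigwedge_\A^2\norm_{\B/\A}(\D)\xrightarrow{\sim}\norm_{\B/\A}(\bigwedge_\B^2\D)$ for an arbitrary quadratic $\B$-algebra $\D$; this is the identification asserted in the theorem. Compatibility with base change is then automatic, since $s_n$, its polarized forms, the formation of exterior powers, and the norm functor on line bundles all commute with base change, just as in the proof of \cref{thm:base-change}. I expect the only real obstacle to be the generator-independence step on the $\norm_{\B/\A}(\bigwedge_\B^2\D)$ side, namely correctly tracking the transformation $\nu'=s_n(U)\nu$ under a change of $\B$-generator of the determinant line; once that bookkeeping identity is in hand, everything else is formal.
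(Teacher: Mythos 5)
Your proposal is correct and follows essentially the same route as the paper: both arguments come down to the observation that a change of generator $x\mapsto Ux+C$ acts by $U$ on $\bigwedge_\B^2\D$ but by $s_n(U)$ on $\bigwedge_\A^2\norm_{\B/\A}(\D)$ (via \cref{prop:different-generator}) and on $\norm_{\B/\A}(\bigwedge_\B^2\D)$, so the local identifications agree and glue. The only cosmetic difference is that you phrase the line-bundle norm via the model $\Hom_\A(\bigwedge_\A^n\B,\bigwedge_\A^n\L)$, whereas the paper compares the two sides as gluings of trivial line bundles along the common cocycle $s_n(U_{ij})$.
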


\begin{proof}
 If $\L$ is a line bundle (rank-$1$ module) over $\B$, then its norm $\norm_{\B/\A}(\L)$ is the line bundle obtained by the following process analogous to \cref{def:norm}:
 \begin{enumerate}
  \item Choose a set of elements $a_i\in \A$ generating the unit ideal such that each localization $\L_{a_i}$ is free as an $\B_{a_i}$-module.
  \item Choose generators for each $\L_{a_i}$ to obtain isomorphisms $\phi_i:\L_{a_i}\cong \B_{a_i}$.
  \item Over overlaps $\A_{a_ia_j}$, we have isomorphisms $(\phi_i)_{a_j} \circ (\phi_j^{-1})_{a_i} \colon \B_{a_ia_j} \cong \L_{a_ia_j} \cong \B_{a_ia_j}$. Each such isomorphism is given by multiplication by a unit $U_{ij}\in\B_{a_ia_j}$; take the norm of $U_{ij}$ to get units $s_n(U_{ij})\in\A_{ij}$.
  \item Glue together free rank-1 $\A_{a_i}$-modules, using multiplication by $s_n(U_{ij})$ as the isomorphisms on overlaps, to get a single rank-$1$ $\A$-module, called $\norm_{\B/\A}(\L)$.
 \end{enumerate}

Now suppose $\L = \bigwedge^2 \D$. We will follow along steps 1 through 4 of \cref{def:norm} and see that $\bigwedge^2 \norm_{\B/\A}(\D)$ agrees with the above construction of $\norm_{\B/\A}(\L)$.

Since the determinant line bundle of a based quadratic algebra $\quadalg[\B]{T}{N}$ is free of rank $1$ with canonical generator $1\wedge x$, completing steps 1 and 2 of \cref{def:norm} also completes steps 1 and 2 of constructing the line bundle norm $\norm_{\B/\A}(\L)$. 
For step 3, suppose that for each $i,j\in\{1,\dots,k\}$ we have an isomorphism $\quadalg[\B_{a_ia_j}]{T_i}{N_i} \cong \D_{a_ia_k} \cong \quadalg[\B_{a_i}{a_j}]{T_j}{N_j}$ sending $x\mapsto U_{ij}x+C_{ij}$. 
Then taking exterior powers, we have isomorphisms $\bigwedge^2 \quadalg[\B_{a_ia_j}]{T_i}{N_i} \cong \bigwedge^2 \D_{a_ia_k} \cong \bigwedge^2 \quadalg[\B_{a_i}{a_j}]{T_j}{N_j}$ sending $1\wedge x \mapsto U_{ij} \cdot (1 \wedge x)$.
This means that multiplication by $U_{ij}$ is also the composite isomorphism $\B_{a_ia_j}\cong \L_{a_ia_j}\cong\B_{a_ia_j}$.

To construct $\norm_{\B/\A}(\D)$, we apply \cref{prop:different-generator} to the isomorphisms $\quadalg[\B_{a_ia_j}]{T_i}{N_i} \cong \quadalg[\B_{a_i}{a_j}]{T_j}{N_j}$ sending $x\mapsto U_{ij}x+C_{ij}$ to get isomorphisms
\[\norm\quadalg[\B_{a_ia_j}]{T_i}{N_i} \cong \norm\quadalg[\B_{a_i}{a_j}]{T_j}{N_j}: x\mapsto s_n(U_{ij})x+\left.\frac{s_n(\lambda C_{ij}+U_{ij}T_j)- s_n(U_{ij}T_j)}{\lambda}\right|_{\lambda=2}\]
which we use as gluing maps to construct $\norm_{\B/\A}(\D)$. Taking the exterior powers of these maps, we obtain isomorphisms sending $1\wedge x \mapsto s_n(U_{ij})\cdot (1\wedge x)$, meaning that $\bigwedge^2 \norm_{\B/\A}(\D)$ is obtained by gluing together free rank-$1$ $\A_{a_i}$-modules with the gluing isomorphisms given by multiplication by $s_n(U_{ij})$. That is exactly the gluing which produces $\norm_{\B/\A}(\L)$, as claimed.
\end{proof}

\begin{theorem}\label{thm:norm-disc}
 Taking the norm commutes with taking discriminants: if $\A\to\B$ is a rank-$n$ algebra and $\D$ is a quadratic $\B$-algebra with discriminant bilinear form $\disc_\D \colon (\extpower_\B^2 \D)^{\otimes 2} \to \B$, then the discriminant bilinear form of the quadratic $\A$-algebra $\norm_{\B/\A}( \D)$ agrees with the norm of $\disc_\D$ as a map of line bundles.
\end{theorem}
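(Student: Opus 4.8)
This theorem is, in essence, the scalar identity $s_n(T)^2-4\sum_{k=1}^n(-4)^{k-1}s_{k,n-k}(N,T^2)=s_n(T^2-4N)$ from the paragraph motivating \cref{def:norm-based}, promoted to a statement about morphisms of line bundles. Both sides of the asserted equality are maps of $\A$-modules $(\extpower_\A^2\norm_{\B/\A}(\D))^{\otimes_\A 2}\to\A$: the left-hand side is $\disc_{\norm_{\B/\A}(\D)}$, and the right-hand side is $\norm_{\B/\A}(\disc_\D)$ — a priori a map $\norm_{\B/\A}\bigl((\extpower_\B^2\D)^{\otimes_\B 2}\bigr)\to\norm_{\B/\A}(\B)$ — transported along the canonical identifications $\norm_{\B/\A}\bigl((\extpower_\B^2\D)^{\otimes_\B 2}\bigr)\cong\norm_{\B/\A}(\extpower_\B^2\D)^{\otimes_\A 2}\cong(\extpower_\A^2\norm_{\B/\A}(\D))^{\otimes_\A 2}$ (monoidality of the line-bundle norm, then \cref{thm:norm-det}) and $\norm_{\B/\A}(\B)\cong\A$. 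An equality of maps of $\A$-modules may be checked on a Zariski cover of $\Spec\A$, and each ingredient just named — the discriminant form of a quadratic algebra, the line-bundle norm of a morphism, the isomorphism of \cref{thm:norm-det}, and the monoidal structure of the line-bundle norm — commutes with localization of $\A$ by construction (and norms commute with base change by \cref{thm:base-change}). Hence it suffices to treat the case where $\D$ is a based quadratic algebra $\quadalg[\B]{T}{N}$.

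\textbf{The based case.} Here $\extpower_\B^2\D$ is free of rank one with canonical generator $1\wedge x$, and a short computation with the trace pairing on the basis $\{1,x\}$ shows that $\disc_\D$ sends $(1\wedge x)^{\otimes 2}$ to $T^2-4N$ (the discriminant of $\quadalg[\B]{T}{N}$); i.e.\ $\disc_\D$ is multiplication by $T^2-4N\in\B$ in the evident trivializations. By the defining behaviour of the line-bundle norm on morphisms recalled in the introduction and used in the proof of \cref{thm:norm-det}, $\norm_{\B/\A}(\disc_\D)$ is then multiplication by $s_n(T^2-4N)\in\A$. It remains to check that, under the identifications above, the trivialization of $(\extpower_\A^2\norm_{\B/\A}(\D))^{\otimes_\A 2}$ induced from $(1\wedge x)^{\otimes 2}$ is the canonical one coming from the generator $1\wedge x$ of $\extpower_\A^2\norm_{\B/\A}(\D)=\extpower_\A^2\quadalg[\A]{s_n(T)}{m}$, where $m=\sum_{k=1}^n(-4)^{k-1}s_{k,n-k}(N,T^2)$: this is exactly what the based-case part of the proof of \cref{thm:norm-det} records (it matches $1\wedge x$ with $1\wedge x$), together with the coherence of the monoidal line-bundle norm on tensor squares. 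Granting this, $\norm_{\B/\A}(\disc_\D)$ sends $(1\wedge x)^{\otimes 2}\mapsto s_n(T^2-4N)$, whereas $\disc_{\norm_{\B/\A}(\D)}$ sends $(1\wedge x)^{\otimes 2}\mapsto s_n(T)^2-4m$, and the two agree by the motivating identity of \cref{def:norm-based}.

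\textbf{Expected obstacle.} The mathematics is thin; the genuine work is the bookkeeping in the last step of the based case — confirming that the chain of canonical isomorphisms on the source ($\norm_{\B/\A}$ of a tensor square versus the tensor square of $\norm_{\B/\A}$, then \cref{thm:norm-det}) together with $\norm_{\B/\A}(\B)\cong\A$ on the target really do intertwine ``multiplication by $s_n(T^2-4N)$'' with ``evaluation at $s_n(T^2-4N)$''. That requires unwinding the proof of \cref{thm:norm-det} and the monoidal coherence of the line-bundle (Ferrand) norm, but introduces no computation beyond the one already carried out to motivate \cref{def:norm-based}.
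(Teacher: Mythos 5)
Your proposal is correct and follows essentially the same route as the paper's proof: reduce by locality to the based case $\D=\quadalg[\B]{T}{N}$, observe that $\disc_\D$ is multiplication by $T^2-4N$ so its norm is multiplication by $s_n(T^2-4N)$, and match this with $\disc_{\norm(\D)}$, which is multiplication by $s_n(T)^2-4\bigl[\tfrac{s_n(\lambda N+T^2)-s_n(T^2)}{\lambda}\bigr]_{\lambda=-4}=s_n(T^2-4N)$ — exactly the identity from the motivation of \cref{def:norm-based}. The only difference is that you flag the coherence of the canonical identifications more explicitly than the paper does, which is a point of care rather than of substance.
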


\begin{proof}
 Note that this only makes sense because by \cref{thm:norm-det} we have an isomorphism $\bigwedge^2\norm(\D) = \norm(\bigwedge^2 \D)$, so the norm of the line bundle homomorphism $\disc_\D \colon (\extpower^2 \D)^{\otimes 2} \to \B$ can be viewed as a homomomorphism $\norm(\disc_\D) \colon (\extpower^2 \norm(\D))^{\otimes 2} \to \A$. Then we can check locally that this agrees with $\disc_{\norm(\D)}$, so assume without loss of generality that $\D = \quadalg{T}{N}$.
 
 With respect to the canonical basis $1\wedge x$ for $\extpower^2\D$, we have that $\disc_\D$ is just multiplication by $T^2 - 4N$. The norm of this map is therefore multiplication by $s_n(T^2 - 4N)$.
 
 On the other hand, with respect to the canonical basis $\disc_{\norm(\D)}$ is multiplication by
 \begin{align*}
  & s_n(T)^2 - 4\left[\frac{s_n(\lambda N + T^2) - s_n(T^2)}{\lambda}\right]_{\lambda = -4} \\
  & = s_n(T)^2 + \left[s_n(\lambda N + T^2) - s_n(T^2)\right]_{\lambda = -4}\\
  &= s_n(T^2) + s_n(T^2 - 4N) - s_n(T^2)\\
  &= s_n(T^2 - 4N),
 \end{align*}
 which agrees with $\norm(\disc_\D)$ as desired.
\end{proof}

In particular, if a quadratic $\B$-algebra $\D$ is \'etale (its discriminant bilinear form is an isomorphism $(\extpower^2 \D)^{\otimes 2}\simto \B$), then its norm $\norm_{\B/\A}(\D)$ is also an \'etale quadratic $\A$-algebra. Since \'etale quadratic algebras are equivalent to $S_2$-torsors, we may ask if the norm operation agrees with the trace map for $S_2$-torsors:

\begin{theorem}\label{thm:etale-addition}
 On \'etale quadratic algebras, the norm operation reduces to the ordinary trace for $S_2$-torsors.
\end{theorem}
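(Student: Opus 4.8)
The plan is to reduce the statement, via base change, to the case of a split cover $\B = \A^n$ --- where it follows directly from \cref{ex:norm-product} --- and then to descend. Throughout I would assume $\A\to\B$ is finite \'etale of rank $n$, the setting in which Waterhouse's trace for $S_2$-torsors is defined: \'etale quadratic $\A$-algebras are the same data as $S_2$-torsors over $\Spec(\A)$, their isomorphism classes are classified by $H^1(\Spec(\A), S_2)$, and the trace of \cite{Waterhouse} is by definition the map $H^1(\Spec(\B), S_2)\to H^1(\Spec(\A), S_2)$ induced by the transfer homomorphism of \'etale sheaves $\pi_\ast\pi^\ast S_2\to S_2$ attached to $\pi\colon\Spec(\B)\to\Spec(\A)$. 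Before comparing the two operations I would record three facts: (i) $\norm_{\B/\A}$ sends \'etale quadratic algebras to \'etale ones, because by \cref{thm:norm-disc} the discriminant of $\norm_{\B/\A}(\D)$ is multiplication by $s_n$ of the (unit) discriminant of $\D$, which is again a unit by \cref{lem:multiplicative-sn}; (ii) both operations commute with base change along any $\A\to\A''$ --- for the norm by \cref{thm:base-change}, and for the trace by the standard compatibility of the \'etale transfer with pullback; and (iii) $\norm_{\B/\A}$ is genuinely functorial (\cref{prop:different-generator}, \cref{prop:functorial}), so it defines a morphism between the stacks of \'etale quadratic algebras over the big \'etale site of $\Spec(\A)$.

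The split case is the heart of the argument. If $\B = \A^n$, then a quadratic $\B$-algebra is an $n$-tuple $\D_1\times\cdots\times\D_n$ of quadratic $\A$-algebras, and \cref{ex:norm-product} gives $\norm_{\A^n/\A}(\D_1\times\cdots\times\D_n)\cong\D_1\ast\cdots\ast\D_n$. When the $\D_i$ are \'etale, this is precisely the $n$-fold sum of the corresponding $S_2$-torsors: on \'etale quadratic algebras $\ast$ is $S_2$-torsor addition, which is exactly the property characterizing $\ast$ in \cite{Voight}, while the transfer attached to the trivial cover $\A^n/\A$ is the $n$-fold sum map $H^1(\Spec(\A), S_2)^n\to H^1(\Spec(\A), S_2)$. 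Thus over a split base the norm and the trace literally coincide, both being the symmetric assignment $(\D_1,\dots,\D_n)\mapsto\D_1\ast\cdots\ast\D_n$.

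For an arbitrary $\B$, I would choose a faithfully flat \'etale extension $\A\to\A'$ splitting $\B$, so that $\B\otimes_\A\A'\cong(\A')^n$, and let $E$ be an \'etale quadratic $\A$-algebra representing $\operatorname{tr}([\D])$. By fact (ii) and the split case, both $\norm_{\B/\A}(\D)\otimes_\A\A'$ and $E\otimes_\A\A'$ are identified with the $\ast$-product of the $n$ conjugate factors of $\D\otimes_\A\A'$. The remaining --- and main --- point is to verify that these identifications over $\A'$ are compatible with the descent datum pulled back from $\B/\A$: over $\Spec(\A'\otimes_\A\A')$ that descent datum merely permutes the $n$ sheets of $\Spec(\B)$, and since the $\ast$-product formula is symmetric in its factors (by commutativity of $\ast$ together with the reordering symmetry of \cref{ex:norm-product}, which rests in turn on the reordering identity in \cref{lem:polarized-identity}), the two pullbacks of the identification differ only by the canonical commutativity isomorphism, the same on both sides. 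Faithfully flat descent then yields $\norm_{\B/\A}(\D)\cong E$ over $\A$, i.e.\ $[\norm_{\B/\A}(\D)] = \operatorname{tr}([\D])$. I expect the bookkeeping in this last step --- carefully matching the two pullbacks to $\Spec(\A'\otimes_\A\A')$ sheet by sheet --- to be the only genuinely delicate part, everything else being a direct consequence of \cref{ex:norm-product}, \cref{thm:base-change}, and the characterization of $\ast$.
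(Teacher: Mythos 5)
Your route is genuinely different from the paper's and is workable in outline, but the step you defer is exactly where the two approaches diverge in cost. The paper never splits the cover $\A\to\B$; instead it trivializes the quadratic algebra, observing that an \'etale quadratic $\B$-algebra is an $S_2$-torsor glued locally from copies of $\quadalg[\B]{1}{0}$ along identity/swap automorphisms, so that the functoriality already established in \cref{prop:different-generator} and \cref{prop:functorial} reduces everything to a single computation: \cref{ex:swap} shows that $\norm_{\B/\A}$ sends the swap to the swap composed with itself $n$ times, which is precisely the transfer map on the automorphism sheaf $S_2$. Your version instead trivializes $\B$ after a faithfully flat base change and invokes \cref{ex:norm-product}; the split case is fine, but the descent of the identification requires two things the paper never establishes: naturality of $\norm_{\B/\A}(\,\cdot\,)$ in the algebra $\B$ itself (the descent datum for $\B\otimes_\A\A'\cong(\A')^n$ acts by automorphisms of $(\A'\otimes_\A\A')^n$ permuting the factors, and you must know that the norm commutes with such automorphisms of the \emph{base} of the quadratic algebra, not merely with automorphisms of the quadratic algebra over a fixed base), and a coherent symmetric structure on the $n$-fold $\ast$-product so that ``differ by the canonical commutativity isomorphism'' yields an actual equality of the two pullbacks rather than an unspecified isomorphism. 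Both can be supplied --- the first because $s_n$ is intrinsic to $\B$ as an $\A$-algebra, the second because the formula in \cref{ex:norm-product} is literally symmetric in the based presentations --- but as written your proof stops short of the only nontrivial verification. What the paper's choice of what to trivialize buys is that this bookkeeping disappears: the comparison happens at the level of a single automorphism of the split quadratic algebra, where the norm and the transfer can be matched by direct calculation.
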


\begin{proof}
 Let $\A\to\B$ be a rank-$n$ algebra. Since a split \'etale $\B$-algebra is isomorphic to $\B^2\cong \quadalg[\B]10$, and $\norm_{\B/\A}\quadalg[\B]10 = \quadalg[\A]10\cong\A^2$, we know that since the norm operation commutes with base change, therefore the norm of any \'etale quadratic algebra is still \'etale. It remains to show that the norm of a $\B$-algebra automorphism of $\B^2$ is its trace as an automorphism of $\A^2$. Any $\B$-algebra automorphism of $\B^2$ is locally either the identity map $\B^2\to\B^2$ (which is sent to the identity map $\A^2\to\A^2$) or the swap map $\B^2\to\B^2$ sending $(b,b')\mapsto (b', b)$. By \cref{ex:swap}, the norm of the swap map is either the identity (if the rank $n$ is even) or again the swap map (if $n$ is odd). In other words, the norm of the swap map is the swap map composed with itself $n$ times, which is exactly the trace of the swap map, as desired.
\end{proof}

Finally, we have the following conjectured relationship between norms and discriminant algebras which was the motivation for constructing the norm functor for quadratic algebras in the first place:

\begin{conjecture}\label{conj:norm-discalg}
 Let $\Delta$ be the discriminant algebra operation of \cite{BieselGioia}, sending rank-$n$ algebras $\A\to\B$ to quadratic algebras $\A \to\Delta_{\B/\A}$. Now let $\A\to\B$ be a rank-$n$ algebra and $\B\to\C$ be a rank-$m$ algebra, so that $\A\to\C$ is also a rank-$mn$ algebra. Is it the case that
 \[\Delta_{\C/\A} \cong \norm_{\B/\A}(\Delta_{\C/\B}) \ast \Delta_{\B/\A}^{\ast m}?\]
\end{conjecture}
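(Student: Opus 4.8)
The plan is to split the conjecture into a part that the results already proved here settle unconditionally and a residual ``$2$-torsion'' rigidity step that is the real content. For the unconditional part, recall from \cite{BieselGioia} that the discriminant algebra is characterized as the quadratic algebra carrying the determinant line bundle and discriminant form of the given algebra: $\extpower^2_\A\Delta_{\B/\A}\cong\extpower^n_\A\B$ and, under this identification, $\disc_{\Delta_{\B/\A}}=\disc_{\B/\A}$. Next, using \cref{ex:norm-product} to write $\D\ast\D'=\norm_{\A^2/\A}(\D\times\D')$ together with \cref{thm:norm-det} and \cref{thm:norm-disc}, and the elementary fact that the line-bundle norm along $\A^2\to\A$ sends $(\L,\L')$ to $\L\otimes_\A\L'$, one gets $\extpower^2_\A(\D\ast\D')\cong\extpower^2_\A\D\otimes_\A\extpower^2_\A\D'$ and $\disc_{\D\ast\D'}=\disc_\D\otimes\disc_{\D'}$; so $\extpower^2$ and $\disc$ carry $\ast$ to $\otimes$ and to tensor products of forms. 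Combining this with \cref{thm:norm-det}, \cref{thm:norm-disc}, and the classical tower formulas $\extpower^{mn}_\A\C\cong(\extpower^n_\A\B)^{\otimes m}\otimes\norm_{\B/\A}(\extpower^m_\B\C)$ and $\disc_{\C/\A}=\disc_{\B/\A}^{\otimes m}\otimes\norm_{\B/\A}(\disc_{\C/\B})$, one checks directly that the two sides of the conjectured isomorphism have canonically isomorphic determinant line bundles and canonically equal discriminant forms. This is the easy half, and it already proves the conjecture whenever quadratic algebras are classified by the pair $(\extpower^2,\disc)$ — for instance over $\ZZ$, or over any base in which $2$ is invertible.

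The difficulty is that $(\extpower^2,\disc)$ does not classify quadratic algebras in general (for instance $\F2\times\F2$ and $\F4$ share both invariants), so an extra comparison near characteristic $2$ is needed. My approach would be as follows. All three operations $\Delta$, $\norm_{\B/\A}$, and $\ast$ commute with base change (\cref{thm:base-change}, base-change invariance of $\Delta$ from \cite{BieselGioia}, and the manifest base-change invariance of $\ast$), so it suffices to prove the isomorphism for a single versal pair $(\B,\C)$ over a versal base ring $R$ carrying the universal rank-$n$ algebra and, over it, the universal rank-$m$ algebra, in the style of the reduction used in \cite{BieselGioia}. Over the open locus $U\subseteq\Spec R$ where both $\B/\A$ and $\C/\B$ are \'etale, all the quadratic algebras in sight are \'etale, i.e.\ $\S2$-torsors; there $\Delta_{\B/\A}$ is the pushforward of the $\S n$-torsor of $\B/\A$ along $\sgn\colon\S n\to\S2$, $\norm_{\B/\A}$ is the transfer on $\S2$-torsors by \cref{thm:etale-addition}, and $\ast$ is Baer sum. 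Describing the torsor of $\C/\A$ through the wreath-product embedding $\S m\wr\S n\hookrightarrow\S{mn}$, the conjectured identity over $U$ reduces to the elementary group-theoretic fact
\[\sgn_{\S{mn}}\big|_{\S m\wr\S n}=\Big(\prod_{i=1}^n \sgn_{\S m}\circ\mathrm{pr}_i\Big)\cdot\big(\sgn_{\S n}\big)^{m},\]
which holds because swapping two blocks of size $m$ is an odd permutation exactly when $m$ is odd (and the first factor is recognized as $\norm_{\B/\A}(\Delta_{\C/\B})$ by applying the iterated \cref{ex:norm-product} to a splitting of the $\S n$-torsor of $\B/\A$). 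Thus the two sides agree over the dense open $U$.

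The main obstacle is the last descent: propagating the isomorphism from $U$ across the non-\'etale locus of $\Spec R$. Concretely, $\mathrm{Isom}_\A(\text{LHS},\text{RHS})$ is a scheme finite over $\Spec R$ equipped with a section over $U$, and one wants to conclude that the section extends over all of $\Spec R$. This would follow if the versal base $R$ is reduced with $U$ dense and is moreover normal, since then the schematic closure of the section would be finite and birational onto $\Spec R$, hence an isomorphism, yielding the desired section. Establishing the required normality (or reducedness) of the versal base — or, failing that, circumventing it by a direct analysis of the obstruction using the explicit construction of $\Delta$ in \cite{BieselGioia}, noting that the discrepancy class between the two sides is supported near characteristic $2$ where one can further reduce to $\A$ a local $\F2$-algebra — is the step I expect to be hard, and is presumably why this statement appears here only as a conjecture.
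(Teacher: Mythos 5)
This statement is presented in the paper as an open conjecture, not a theorem: the introduction explicitly says it ``is still an open question,'' and the only thing the paper asserts is that the \'etale case is known (Waterhouse's Theorem 4). Your proposal does not close that gap, and you say so yourself: the final step --- extending the isomorphism from the \'etale locus $U$ across the non-\'etale locus of the versal base --- is left as the part you ``expect to be hard.'' A strategy that terminates in an unresolved obstruction is not a proof, so as a proof attempt this fails; what you have actually established (modulo routine verification) is the conjecture when $2$ is invertible in $\A$ and the \'etale case, both of which are consistent with, and essentially equivalent to, what the paper already records as known.

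Beyond the admitted gap, two of your intermediate reductions are themselves shakier than you present them. First, the reduction to ``a single versal pair $(\B,\C)$ over a versal base ring $R$'' needs justification: the parameter space of (rank-$n$ algebra, rank-$m$ algebra over it) pairs is a complicated affine scheme cut out by associativity and commutativity relations on structure constants, and it is not known (and is genuinely doubtful in general) that it is reduced, normal, or has dense \'etale locus --- precisely the hypotheses your schematic-closure argument requires. Moreover an arbitrary pair over an arbitrary $\A$ is only a pullback of the versal one locally on $\Spec\A$ after choosing bases, so one must additionally check that the resulting identifications glue. Second, the remark that the ``easy half'' proves the conjecture ``over $\ZZ$'' does not help for the general statement, since the conjecture concerns arbitrary base rings and nothing forces the situation to descend to $\ZZ$. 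The genuinely useful content of your writeup is the correct identification of where the difficulty lives (the failure of $(\extpower^2,\disc)$ to classify quadratic algebras near characteristic $2$) and the wreath-product sign computation explaining the exponent $m$ on $\Delta_{\B/\A}$; but the conjecture remains open after your argument, exactly as the paper states.
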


It is known that \cref{conj:norm-discalg} holds in case $\A\to\B$ and $\B\to\C$ are \'etale; see \cite[Theorem 4]{Waterhouse} for a proof.

\bibliographystyle{acm}
\bibliography{RefList}

\end{document}